\def\NZQ{\Bbb}               
\def\ZZ{{\NZQ Z}}
\def\B'c{{\mathcal{B'}}}
\def\U'c{{\mathcal{U'}}}
\def\opn#1#2{\def#1{\operatorname{#2}}} 
\opn\chara{char}
\opn\length{\ell}
\opn\projdim{proj\,dim}
\opn\injdim{inj\,dim}
\opn\ini{in}
\opn\rank{rank}
\opn\depth{depth}
\opn\sdepth{sdepth}
\opn\height{ht}
\opn\embdim{emb\,dim}
\opn\codim{codim}
\opn\Tr{Tr}
\opn\bigrank{big\,rank}
\opn\superheight{superheight}\opn\lcm{lcm}
\opn\trdeg{tr\,deg}%
\opn\reg{reg}
\opn\lreg{lreg}
\opn\set{set}
\opn\supp{Supp}
\opn\shad{Shad}
\opn\div{div}
\opn\Div{Div}
\opn\cl{cl}
\opn\Cl{Cl}
\opn\Spec{Spec}
\opn\Supp{Supp}
\opn\supp{supp}
\opn\Sing{Sing}
\opn\Ass{Ass}
\opn\Min{Min}
\opn\size{size}
\opn\bigsize{bigsize}
\opn\lex{lex}
\opn\Ann{Ann}
\opn\Rad{Rad}
\opn\Soc{Soc}
\opn\Ker{Ker}
\opn\Coker{Coker}
\opn\Im{Im}
\opn\Hom{Hom}
\opn\Tor{Tor}
\opn\Ext{Ext}
\opn\End{End}
\opn\Aut{Aut}
\opn\id{id}
\opn\nat{nat}
\opn\GL{GL}
\opn\SL{SL}
\opn\mod{mod}
\opn\ord{ord}
\opn\aff{aff}
\opn\con{conv}
\opn\relint{relint}
\opn\st{st}
\opn\lk{lk}
\opn\cn{cn}
\opn\core{core}
\opn\vol{vol}
\opn\gr{gr}
\def\pot#1#2{#1[\kern-0.28ex[#2]\kern-0.28ex]}
\opn\dirlim{\underrightarrow{\lim}}
\opn\invlim{\underleftarrow{\lim}}
\def\pnt{{\raise0.5mm\hbox{\large\bf.}}}
\def\Implies{\ifmmode\Longrightarrow \else
     \unskip${}\Longrightarrow{}$\ignorespaces\fi}
\def\implies{\ifmmode\Rightarrow \else
     \unskip${}\Rightarrow{}$\ignorespaces\fi}
\def\iff{\ifmmode\Longleftrightarrow \else
     \unskip${}\Longleftrightarrow{}$\ignorespaces\fi}
\newtheorem{Theorem}{Theorem}[section]
\newtheorem{Lemma}[Theorem]{Lemma}
\newtheorem{Corollary}[Theorem]{Corollary}
\newtheorem{Proposition}[Theorem]{Proposition}
\newtheorem{Remark}[Theorem]{Remark}
\newtheorem{Example}[Theorem]{Example}
\newtheorem{Definition}[Theorem]{Definition}
\newtheorem{Conjecture}[Theorem]{Conjecture}
\let\epsilon=\varepsilon
\let\phi=\varphi
\let\kappa=\varkappa
\numberwithin{equation}{section}
\title{Depth and Stanley depth of the edge ideals of the powers of paths and cycles}
\author[Zahid Iqbal]{Zahid Iqbal}
\address{Zahid Iqbal, School of Natural Sciences, National University of Sciences and Technology Islamabad, Sector H-12, Islamabad Pakistan.}
\email{786zahidwarraich@gmail.com}
\author[Muhammad Ishaq]{Muhammad Ishaq}
\address{Muhammad Ishaq, School of Natural Sciences, National University of Sciences and Technology Islamabad, Sector H-12, Islamabad Pakistan.}
\email{ishaq$\_\,$maths@yahoo.com}
\begin{document}
\maketitle
\begin{abstract}
Let $k$ be a positive integer. We compute depth and Stanley depth of the quotient ring of the edge ideal associated to the $k^{th}$ power of a path on $n$ vertices. We show that both depth and Stanley depth have the same values and can be given in terms of $k$ and $n$. For $n\geq 2k+1$, let $n\equiv 0,k+1,k+2,\dots, n-1(\mod(2k+1))$. Then we give values of depth and Stanley depth of the quotient ring of the edge ideal associated to the $k^{th}$ power of a cycle on $n$ vertices and tight bounds otherwise, in terms of $n$ and $k$. We also compute lower bounds for the Stanley depth of the edge ideals associated to the $k^{th}$ power of a path and a cycle and prove a conjecture of Herzog presented in \cite{HS} for these ideals.
\\\\
\textbf{Keywords:} Monomial ideal, edge ideal, depth, Stanley decomposition, Stanley depth, $k^{th}$ power of a path, $k^{th}$ power of a cycle.\\
\textbf{2000 Mathematics Subject Classification:} Primary: 13C15; Secondary: 13F20; 05C38; 05E99.
\end{abstract}
\section{Introduction}
\noindent Let $K$ be a field and $S := K[x_{1}, \ldots, x_{n}]$ the
polynomial ring over $K$. Let $M$ be a finitely generated
$\mathbb{Z}^{n}$-graded $S$-module. A Stanley decomposition of $M$ is a
presentation of $K$-vector space $M$ as a finite direct sum
$$\mathcal{D}: M = \bigoplus_{i = 1}^{s}v_{i}K[W_{i}],$$ where $v_{i}\in M$, $W_{i}\in \{x_{1}, \ldots, x_{n}\}$ such that
$v_{i}K[W_{i}]$ denotes the $K$-subspace of $M$, which is generated
by all elements $v_iw$, where $w$ is a monomial in $K[W_{i}]$. The
$\mathbb{Z}^{n}$-graded $K$-subspace $v_{i}K[W_{i}]\subset M$ is
called a Stanley space of dimension $|W_{i}|$, if $v_{i}K[W_{i}]$ is
a free $K[W_{i}]$-module, where $|W_{i}|$ denotes the cardinality of $W_i$. Define
$$\sdepth(\mathcal{D}) = \min\{|W_{i}|: i = 1, \ldots, s\},$$ and
$$\sdepth(M) = \max\{\sdepth(\mathcal{D}): \text{$\mathcal{D}$~is~a~Stanley~decomposition~of
~$M$}\}.$$ The number $\sdepth(\mathcal{D})$ is called the Stanley
depth of decomposition $\mathcal{D}$ and $\sdepth(M)$ is called the Stanley depth
of $M$. Stanley conjectured in \cite{RP} that $\sdepth(M)\geq \depth(M)$ for any $\ZZ^n$-graded $S$-module $M$. This conjecture was disproved by Duval et al. in \cite{DG} as was expected due to different natures of both the invariants. However, the relation between Stanley depth and some other invariants has already been established see \cite{HVZ,HDV,D3,ZT}. In \cite{HVZ}, Herzog, Vladoiu and Zheng proved that the Stanley depth of $M$ can be computed in a finite number of steps, if $M = J/I$, where $I\subset J\subset S$ are monomial ideals. But practically it is too hard to compute Stanley depth by using this method see for instance, \cite{BH,MC8,KS,KY}. For computing Stanley depth for some classes of modules we refer the reader to \cite{MI,ID,PFY,AR1}. In this paper we attempt to find values and reasonable bounds for depth and Stanley depth of $I$ and $S/I$, where $I$ is the edge ideal of a power of a path or a cycle. We also compare the values of $\sdepth(I)$ and $\sdepth(S/I)$ and give positive answers to the following conjecture of Herzog.
\begin{Conjecture}\cite{HS}\label{C1}
Let $I\subset S$ be a monomial ideal then $$\sdepth(I)\geq \sdepth(S/I).$$
\end{Conjecture}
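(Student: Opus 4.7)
The plan is to verify Herzog's Conjecture~\ref{C1} for the specific class of monomial ideals studied in this paper, namely the edge ideals $I = I(P_n^k)$ and $I = I(C_n^k)$ of the $k^{th}$ powers of paths and cycles, rather than to attack the statement in full generality (which is an open problem). The strategy is to combine the explicit formulas for $\sdepth(S/I)$ promised in the abstract with independent lower bounds for $\sdepth(I)$ which, compared case by case, dominate them.

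First, I would compute $\sdepth(S/I)$ (and $\depth(S/I)$) inductively. The standard short exact sequence
\begin{equation*}
0 \To S/(I : x_\ell) \xrightarrow{\,\cdot x_\ell\,} S/I \To S/(I, x_\ell) \To 0,
\end{equation*}
for a carefully chosen vertex $x_\ell$ (for paths, an endpoint; for cycles, any vertex after breaking the symmetry), reduces both $(I:x_\ell)$ and $(I,x_\ell)$ to (colon ideals of) edge ideals of smaller $k^{th}$ powers of paths. The Depth Lemma together with the Rauf-type inequality $\sdepth(S/I) \geq \min\{\sdepth(S/(I:x_\ell)),\sdepth(S/(I,x_\ell))\}$ then produces the closed formulas as functions of $n$ and $k$; the residue of $n$ modulo $2k+1$ determines which branch of the recursion dominates.

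Next, I would derive a lower bound for $\sdepth(I)$ using the direct-sum decomposition
\begin{equation*}
I \;=\; x_\ell \cdot (I : x_\ell) \,\oplus\, \bigl(I \cap K[x_1,\dots,\widehat{x_\ell},\dots,x_n]\bigr)
\end{equation*}
of $K$-vector spaces, together with the known behavior of Stanley depth under such splittings. This yields inductively an explicit lower bound that one then checks, residue class by residue class, exceeds or equals the exact value $\sdepth(S/I)$ computed above. The conjecture then follows immediately by comparing the two formulas.

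The main obstacle, I expect, is the cycle case $I(C_n^k)$ when the residue $n \bmod (2k+1)$ lies in the ``bad'' range announced in the abstract, where only bounds (not exact values) for $\sdepth(S/I)$ are available. There the margin between our lower bound for $\sdepth(I)$ and our upper estimate for $\sdepth(S/I)$ can collapse, and a direct construction of a Stanley decomposition (or a finer peeling argument that splits the cycle into two overlapping paths) will likely be needed to preserve the inequality. A secondary obstacle is bookkeeping: the induction steps shift $n$ by different amounts depending on whether one colons or quotients by $x_\ell$, so the inductive hypothesis must be stated uniformly across all residue classes of $n$ modulo $2k+1$ simultaneously.
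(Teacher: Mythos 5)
Your reading of the statement is right: the paper does not prove Herzog's conjecture in general, only for $I(P_n^k)$ and $I(C_n^k)$, and your overall strategy — compute $\sdepth(S/I)$ (exactly for paths, bounded above for cycles) by peeling off the variables $x_{n-k},\dots,x_{n-1}$ through short exact sequences, then lower-bound $\sdepth(I)$ by the vector-space splitting $I = x_\ell(I:x_\ell)\oplus (I\cap K[x_1,\dots,\widehat{x_\ell},\dots,x_n])$, then compare — is exactly the paper's route for the path case (Theorems \ref{Th11} and \ref{th3}). The one place you leave a genuine hole is the cycle case in the residues $n\equiv 1,\dots,k \pmod{2k+1}$, which you correctly flag but do not resolve. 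The paper closes it as follows: from the short exact sequence $0\to I(P_n^k)\to I(C_n^k)\to I(C_n^k)/I(P_n^k)\to 0$ one gets $\sdepth(I(C_n^k))\geq\min\{\sdepth(I(P_n^k)),\sdepth(I(C_n^k)/I(P_n^k))\}$, and an explicit Stanley decomposition of the quotient $I(C_n^k)/I(P_n^k)$ (whose monomials are supported on the ``wrap-around'' edges, each forcing a small path ideal on the far side of the cycle) gives $\sdepth(I(C_n^k)/I(P_n^k))\geq\lceil\frac{n+k+1}{2k+1}\rceil=\lceil\frac{n-k}{2k+1}\rceil+1$ (Proposition \ref{pr3}). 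The decisive observation is that this quantity is always at least the \emph{upper} bound $\lceil\frac{n}{2k+1}\rceil$ for $\sdepth(S/I(C_n^k))$ from Corollary \ref{Cor22}, with equality precisely in the bad residue classes; so the comparison is made against the upper bound and the unknown exact value of $\sdepth(S/I(C_n^k))$ is never needed. If you carry out your plan, make sure your comparison in the cycle case is phrased against that upper bound rather than against an ``exact value'' you will not be able to compute.
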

The above conjecture has been proved in some special cases see \cite{ZIA,KY,ID,AR1}. The paper is organized as follows: First two sections are devoted to introduction, definitions, notations, and discussion of some known results. In third section, we compute depth and Stanley depth of $S/I(P_n^k)$ as Theorems (\ref{Th1} and \ref{Th11}), where $I(P_n^k)$ denotes the edge ideal of the $k^{th}$ power of a path $P_n$ on $n$ vertices. These theorems prove that
$$\depth(S/I(P_n^k))=\sdepth(S/I(P_n^k))=\lceil\frac{n}{2k+1}\rceil.$$
Let $I(C_n^k)$ be the edge ideal of the $k^{th}$ power of a cycle $C_n$ on $n$ vertices. In fourth section we give some lower bounds for depth and Stanley depth of $S/I(C_n^k)$ as Theorems (\ref{Th22} and \ref{Th33}). For $n\geq 2k+1$, our Corollaries (\ref{Cor11} and \ref{Cor22}) show that
\begin{eqnarray*}
&\depth(S/I(C_n^k))=\sdepth(S/I(C_n^k))=\lceil\frac{n}{2k+1}\rceil, \text{\,\, if\,\, $n\equiv 0,k+1,\dots,n-1(\mod(2k+1))$};&\\
&\lceil\frac{n}{2k+1}\rceil-1\leq \depth(S/I(C_n^k)),\sdepth(S/I(C_n^k))\leq \lceil\frac{n}{2k+1}\rceil, \text{ if $n\equiv 1,2,\dots,k(\mod(2k+1)).$}&
\end{eqnarray*}
Last section is devoted to Conjecture \ref{C1} for $I(P_n^k)$ and $I(C_n^k)$. By our Theorem \ref{th3} we have $$\sdepth(I(P_n^k))\geq \lceil\frac{n}{2k+1}\rceil+1,$$ which shows that $I(P_n^k)$ satisfies Conjecture \ref{C1}. Let $n\geq 2k+1$,
Proposition \ref{pr3} gives a lower bound for $I(C_{n}^k)/I(P_{n}^k)$ that is  $$\sdepth({I(C_{n}^k)}/{I(P_{n}^k)})\geq\lceil\frac{n+k+1}{2k+1}\rceil.$$
Corollary \ref{Cor4} of this paper proves that $I(C_n^k)$ satisfies Conjecture \ref{C1}. The above results are proved in \cite{ZIA} when $k=2$.
\section{Definitions and notation}
Throughout this paper, $\mathfrak{m}$ denotes the unique maximal graded ideal $(x_1,x_2,\dots,x_n)$ of $S$. We set $S_m:=K[x_1,x_2,\dots,x_m]$, $\supp(v):=\{i:x_i|v\}$ and $\supp(I):=\{i:x_i|u, \text{ for some } u\in \mathcal{G}(I)\}$, where $\mathcal{G}(I)$ denotes the unique minimal set of monomial generators of the monomial ideal $I$.  Let $I\subset S$ be an ideal then we write $I$ instead of $IS$. Thus every ideal will be considered an ideal of $S$ unless otherwise stated. Let $I$ and $J$ be monomial ideals of $S$, then for $I+J$ we write $(I,J)$.
\begin{Definition}
{\em
Let $G$ be a simple graph, for a positive integer $k$ the $k^{th}$ power of graph $G$ is another graph $G^k$ on the same set of vertices, such that two vertices are adjacent in $G^k$ when their distance in $G$ is at most $k$.
}
\end{Definition}
In the whole paper we label the vertices of the graph $G$ by $1,2,\dots,n$. We denote the set of vertices of G by $[n]:=\{1,2,\dots,n\}$. We assume that all graphs and their powers are simple graphs. We also assume that all graphs have at least two vertices and a non-empty edge set.
\begin{Definition}
{\em
Let $G$ be a graph then the edge ideal $I(G)$ associated to $G$ is the square free monomial ideal of $S$ that is
\begin{equation*}
I(G)=(x_{i}x_{j} : \{i, j\}\in E(G)).
\end{equation*}
}
\end{Definition}
\begin{Definition}
{\em
Let $G$ be a graph, then $G$ is called a path if $E(G)=\{\{i,i+1\}:i\in[n-1]\}$. A path on $n$ vertices is usually denoted by $P_n$.
}
\end{Definition}

\begin{Definition}
{\em
Let $n\geq3$, a graph $G$ is called a cycle if $E(G)=\{\{i,i+1\}:i\in[n-1]\}\cup\{1,n\}$. A cycle on $n$ vertices is usually denoted by $C_n$.
}
\end{Definition}
\begin{Definition}
{\em
For $n\geq 2$, the $k^{th}$ power of a path, denoted by $P_n^k$, is a graph such that for all $1\leq i<j\leq n$, $\{i,j\}\in E(P^{k}_{n})$ if and only if $0<j-i\leq k$. That is the edge set of $P_n^k$ is:
\begin{description}
\item[(1)] If $n\leq k+1$, then $E(P_{n}^k)=\big\{\{i,j\}:1\leq i<j\leq n\big\}$.
\item[(2)] If $n\geq k+2$, then
\begin{eqnarray*}
E(P_n^k)=\bigcup_{i=1}^{n-k}\big\{\{i,i+1\},\{i,i+2\},\dots,\{i,i+k\}\big\}\bigcup \bigcup_{j=n-k+1}^{n-1}\big\{\{j,j+1\},\{j,j+2\},\dots,\{j,n\}\big\}.
\end{eqnarray*}
\end{description}
}

\end{Definition}
\begin{Definition}
{\em For $n\geq 3$, the $k^{th}$ power of a cycle, denoted by $C_n^k$, is a graph such that for all vertices $1\leq i,j\leq n$, $\{i,j\}\in E(C^{k}_{n})$ if and only if $|j-i|\leq k$ or $|j-i|\geq n-k$. The edge set of $C_n^k$ is:
\begin{description}
\item[(1)] If $n\leq 2k+1$, then $E(C_{n}^k)=\big\{\{i,j\}:1\leq i<j\leq n\big\}$.
\item[(2)] If $n\geq 2k+2$, then
\begin{eqnarray*}
E(C_n^k)= E(P_n^k)\bigcup\bigcup_{l=1}^{k}\big\{\{l,l+n-k\},\{l,l+n-k+1\},\{l,l+n-k+2\}\dots,\{l,n\}\big\}.
\end{eqnarray*}
\end{description}
}
\end{Definition}

The minimal generating set for the edge ideal of $P_n^k$ is:
\begin{description}
\item[(1)] If $n\leq k+1$, then $\mathcal{G}(I(P_{n}^k))=\{x_ix_{j}:1\leq i<j\leq n\}$. That is $I(P_{n}^k)$ is a square free Veronese ideal of degree 2 in variables $x_1,x_2,\dots,x_n$.
\item[(2)] If $n\geq k+2$, then
$$\mathcal{G}(I(P_n^k))=\bigcup_{i=1}^{n-k}\{x_ix_{i+1},x_ix_{i+2},\dots,x_ix_{i+k}\} \bigcup\bigcup_{j=n-k+1}^{n-1}\{x_j x_{j+1},x_j x_{j+2},\dots,x_j x_n\}.$$
\end{description}

The minimal generating set for the edge ideal of $C_n^k$ is:
\begin{description}
\item[(1)] If $n\leq 2k+1$, then $\mathcal{G}(I(C_n^k))=\{x_{i}x_{j}:1\leq i<j\leq n\}$. That is $I(C_{n}^k)$ is a square free Veronese ideal of degree 2 in variables $x_1,x_2,\dots,x_n$.
\item[(2)] If $n\geq 2k+2$, then
$$\mathcal{G}(I(C_n^k))=\mathcal{G}(I(P_n^k))\bigcup\bigcup_{l=1}^{k}\{x_l x_{l+n-k},x_l x_{l+n-k+1},\dots,x_l x_n\}.$$
\end{description}
\begin{Lemma}\cite[Lemma 3]{Lin}
{\em  For $n\ge k+1$, $|\mathcal{G}(I(P_n^k))|=nk-\frac{k(k+1)}{2}$.}
\end{Lemma}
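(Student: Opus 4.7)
The plan is to read off the generators directly from the explicit description of $\mathcal{G}(I(P_n^k))$ given just before the lemma, partition them according to the smaller of the two indices, and count. There are two ranges to consider: the boundary case $n=k+1$ and the generic case $n\geq k+2$.

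For $n\geq k+2$, I would index each generator $x_ix_j$ (with $i<j$) by its smaller index $i$. From the displayed union, the generators with smaller index $i\in\{1,2,\dots,n-k\}$ are exactly $x_ix_{i+1},x_ix_{i+2},\dots,x_ix_{i+k}$, contributing $k$ generators each, for a total of $(n-k)k$. The generators with smaller index $j\in\{n-k+1,\dots,n-1\}$ are $x_jx_{j+1},\dots,x_jx_n$, contributing $n-j$ generators each, for a total of
\[
\sum_{j=n-k+1}^{n-1}(n-j)=\sum_{m=1}^{k-1}m=\frac{k(k-1)}{2}.
\]
Adding the two pieces and simplifying,
\[
(n-k)k+\frac{k(k-1)}{2}=nk-k^2+\frac{k^2-k}{2}=nk-\frac{k(k+1)}{2},
\]
which is the claimed count.

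For the boundary case $n=k+1$, the description of $\mathcal{G}(I(P_n^k))$ says it is the squarefree Veronese ideal of degree $2$ in $n=k+1$ variables, so it has $\binom{k+1}{2}=\frac{k(k+1)}{2}$ generators. One checks that this matches the formula: for $n=k+1$, $nk-\frac{k(k+1)}{2}=(k+1)k-\frac{k(k+1)}{2}=\frac{k(k+1)}{2}$.

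This is really a routine bookkeeping argument; the only place where one could slip is in getting the summation bounds right, since the "tail'' indices $j\in\{n-k+1,\dots,n-1\}$ produce fewer than $k$ generators each, and it is tempting to either double-count or to forget them entirely. I would therefore keep the two unions in the definition of $\mathcal{G}(I(P_n^k))$ strictly separated throughout the count, and verify the arithmetic once more at the end to confirm $(n-k)k+\tfrac{k(k-1)}{2}=nk-\tfrac{k(k+1)}{2}$.
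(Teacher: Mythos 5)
Your count is correct and complete. The paper itself gives no proof of this lemma --- it is simply cited from the reference of Lin et al. --- so there is nothing to compare against; your direct bookkeeping from the displayed generating set (indexing each squarefree degree-two generator $x_ix_j$ by its smaller index, getting $(n-k)k$ from the first union and $\sum_{m=1}^{k-1}m=\tfrac{k(k-1)}{2}$ from the second, and checking the boundary case $n=k+1$ against $\binom{k+1}{2}$) is exactly the natural verification. The only implicit point worth being aware of is that the two unions are disjoint and contain no repeated monomials, which your partition by smaller index already guarantees, so the argument is airtight.
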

\begin{Remark}
{\em If $n\geq 2k+1$, then $|\mathcal{G}(I(C_n^k))|=nk$.}
\end{Remark}
\begin{Definition}
{\em Let $G$ be a graph and let $i\in [n]$, then we define $$N_G(x_i):=\{x_j:x_ix_j\in \mathcal{G}\big(I(G)\big)\},$$ where $j\in [n]\backslash \{i\}$}.
\end{Definition}

Let $k\geq 2$, $0\leq i\leq k-1$ and $n\geq 2k+2$, then in the following we introduce some particular monomial prime ideals of $S$. $$A_{n-k+i}:=(x_{n-k},x_{n-k+1},\dots,x_{n-k+i}).$$
\big(We set $A_{n-k-1}=(0)$\big).
$$B_{n-k+i}:=\big(N_{P_n^k}(x_{n-k+i})\big)=(x_{n-2k+i},x_{n-2k+i+1},\dots,x_{n-k+i-1},x_{n-k+i+1},\dots,x_n).$$
and
\begin{multline*}
D_{n-k+i}:=\big(N_{C_n^k}(x_{n-k+i})\big)\\=\left\{
               \begin{array}{ll}
                 (x_{n-2k},x_{n-2k+1},\dots,x_{n-k-1},x_{n-k+1},\dots,x_n), & \hbox{if $i=0$;} \\
                 (x_{n-2k+i},x_{n-2k+i+1},\dots,x_{n-k+i-1},x_{n-k+i+1},\dots,x_n,x_1,x_2,\dots,x_{i}), & \hbox{if $1\leq i\leq k-1$.}
               \end{array}
             \right.
\end{multline*}
These monomial prime ideals and the following function play important role in the proof of our main theorems. Let $k\geq 2$, $0\leq i\leq k-1$ and $2k+2\leq n\leq 3k+1$, we define a function $$f:\{n-k,n-k+1,\dots,n-k+i,\dots,n-1\}\longrightarrow \ZZ^+\cup \{0\},\text{\,\,by}$$
$$f(n-k+i)=\left\{
         \begin{array}{ll}
           k, & \hbox{if\, $n-2k-1+i\geq k+1$;} \\
           n-2k-2+i, & \hbox{if\, $2\leq n-2k-1+i<k+1$.} \\
                    \end{array}
       \right.
$$
In the following we recall some known results that we refer several times in this paper.
\begin{Lemma}\label{le01}
(Depth Lemma) If $0\longrightarrow U \longrightarrow M \longrightarrow N \longrightarrow 0$ is a short exact sequence of modules over a
local ring $S$, or a Noetherian graded ring with $S_0$ local, then
\begin{enumerate}
\item $\depth M \geq \min\{\depth N, \depth U\}$.
\item $\depth U \geq \min\{\depth M, \depth N + 1\}$.
\item $\depth N \geq \min\{\depth U- 1, \depth M\}$.
\end{enumerate}
\end{Lemma}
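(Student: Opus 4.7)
The plan is to reduce each of the three inequalities to the Rees characterization of depth: for a finitely generated module $M$ over the local (or graded local) ring $S$ with unique maximal (graded) ideal $\mm$,
$$\depth M = \min\{\,i\geq 0 : \Ext^i_S(S/\mm,M)\neq 0\,\}.$$
This is the one nontrivial input I would invoke at the outset; its proof (a Koszul-type induction on a maximal regular sequence) is entirely standard and insensitive to the details at hand, so I would simply cite it. Applying $\Hom_S(S/\mm,-)$ to the short exact sequence $0\to U\to M\to N\to 0$ then yields the long exact Ext sequence
$$\cdots \to \Ext^{i-1}_S(S/\mm,N)\to \Ext^i_S(S/\mm,U)\to \Ext^i_S(S/\mm,M)\to \Ext^i_S(S/\mm,N)\to \Ext^{i+1}_S(S/\mm,U)\to \cdots$$
All three inequalities fall out of short diagram chases in this sequence.

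Writing $a=\depth U$, $b=\depth M$, $c=\depth N$, I would treat the parts one at a time. For (1), pick $i<\min\{a,c\}$: both $\Ext^i_S(S/\mm,U)$ and $\Ext^i_S(S/\mm,N)$ vanish, so $\Ext^i_S(S/\mm,M)=0$, and hence $b\geq \min\{a,c\}$. For (2), pick $i<\min\{b,c+1\}$: then $\Ext^{i-1}_S(S/\mm,N)=0$ (since $i-1<c$) and $\Ext^i_S(S/\mm,M)=0$, so $\Ext^i_S(S/\mm,U)$, sandwiched between two zeros, vanishes, giving $a\geq \min\{b,c+1\}$. For (3), pick $i<\min\{a-1,b\}$: then $\Ext^i_S(S/\mm,M)=0$ and $\Ext^{i+1}_S(S/\mm,U)=0$ (since $i+1<a$), forcing $\Ext^i_S(S/\mm,N)=0$, so $c\geq \min\{a-1,b\}$.

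There is essentially no obstacle beyond correctly lining up the indices in the three chases; the work consists entirely of pushing zeros across the Ext sequence. The graded setting presents no additional difficulty, since exactly the same long exact sequence is available for graded Ext with respect to the graded residue field. The only bookkeeping point worth flagging is the convention $\depth(0)=+\infty$: with it, the right-hand sides in (1)--(3) remain well-defined and correct in the degenerate situation where one of $U$, $M$, or $N$ is zero, so no separate case analysis is needed.
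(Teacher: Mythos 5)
Your argument is correct and is the standard proof of the Depth Lemma: the Rees characterization $\depth M=\min\{i:\Ext^i_S(S/\mm,M)\neq 0\}$ plus the three evident zero-chases in the long exact Ext sequence, with the indices lined up properly in each case. The paper states this lemma without proof as a classical fact, so there is nothing to compare against; your write-up supplies exactly the expected textbook argument (including the correct handling of the graded case and the convention $\depth(0)=+\infty$).
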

\begin{Lemma}\label{le02}\cite{AR1}
Let $0\longrightarrow U\longrightarrow V\longrightarrow W\longrightarrow 0$ be a short exact sequence of $\ZZ^{n}$-graded $S$-modules. Then
\begin{equation*}
\sdepth(V)\geq\min\{\sdepth(U),\sdepth(W)\}.
\end{equation*}
\end{Lemma}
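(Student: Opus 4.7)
The plan is to build a Stanley decomposition of $V$ achieving depth $d := \min\{\sdepth(U), \sdepth(W)\}$ by lifting a Stanley decomposition of $W$ to $V$ and gluing it to a Stanley decomposition of $U$. First I would fix Stanley decompositions $\mathcal{D}_U : U = \bigoplus_{i} u_i K[Y_i]$ and $\mathcal{D}_W : W = \bigoplus_{j} w_j K[Z_j]$ that realise $\sdepth(U)$ and $\sdepth(W)$, so that $|Y_i| \geq \sdepth(U)$ and $|Z_j| \geq \sdepth(W)$. Identifying $U$ with its image in $V$ via the given injection, each $u_i K[Y_i]$ is already a free $K[Y_i]$-submodule of $V$, hence a Stanley space of $V$ of the same dimension. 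For the $W$-side, since the surjection $\pi : V \to W$ is $\ZZ^{n}$-graded, I would choose for each index $j$ a $\ZZ^{n}$-homogeneous preimage $\tilde{w}_j \in V$ of $w_j$ of the same multidegree.

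Next I would check that each $\tilde{w}_j K[Z_j] \subset V$ is itself a Stanley space of dimension $|Z_j|$: if $\tilde{w}_j \cdot p = 0$ in $V$ for some $p \in K[Z_j]$, then applying $\pi$ yields $w_j \cdot p = 0$ in $W$, and freeness of $w_j K[Z_j]$ forces $p = 0$. The critical step is to verify that
\[
\mathcal{D}_V : V = \Bigl(\bigoplus_{i} u_i K[Y_i]\Bigr) \,\oplus\, \Bigl(\bigoplus_{j} \tilde{w}_j K[Z_j]\Bigr)
\]
is an honest direct sum of $K$-vector spaces. For spanning: any $v \in V$ projects to $\pi(v) = \sum_j w_j p_j$ with uniquely determined $p_j \in K[Z_j]$ via $\mathcal{D}_W$, and then $v - \sum_j \tilde{w}_j p_j \in \ker \pi = U$ is covered by $\mathcal{D}_U$. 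For directness, from a relation $\sum_i u_i q_i + \sum_j \tilde{w}_j p_j = 0$ in $V$, applying $\pi$ gives $\sum_j w_j p_j = 0$, so each $p_j = 0$ by $\mathcal{D}_W$, and then $\sum_i u_i q_i = 0$ forces each $q_i = 0$ by $\mathcal{D}_U$. Hence $\mathcal{D}_V$ is a Stanley decomposition of $V$ with $\sdepth(\mathcal{D}_V) \geq d$, yielding the claimed inequality.

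The main obstacle is precisely the directness check: one must ensure that the lifted spaces $\tilde{w}_j K[Z_j]$ do not acquire any spurious $K[Z_j]$-relations when combined with the submodule $U$, despite the non-canonical choice of the lifts $\tilde{w}_j$. Exactness of $0 \to U \to V \to W \to 0$ (specifically the identification $\ker \pi = U$) is exactly what rules this out, while the $\ZZ^{n}$-graded hypothesis on the maps is what guarantees that the homogeneous lifts exist in the first place, so that the pieces of $\mathcal{D}_V$ qualify as $\ZZ^{n}$-graded Stanley spaces.
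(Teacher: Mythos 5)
Your argument is correct and is essentially the standard proof of this result from the cited reference \cite{AR1}: the paper itself gives no proof, only the citation, and Rauf's argument is exactly this lifting of a Stanley decomposition of $W$ along homogeneous preimages, glued to a Stanley decomposition of $U$, with directness enforced by $\ker\pi = U$. All the checks you flag (existence of graded lifts, freeness of the lifted Stanley spaces, spanning, and directness) are handled correctly.
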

\begin{Lemma}\cite{ZIA}\label{zia}
Let $I\subset S$ be a squarefree monomial ideal with $\supp(I)=[n]$, let $v:=x_{i_1}x_{i_2}\cdots x_{i_q}\in S/I$, such that $x_jv\in I$, for all $j\in [n]\backslash \supp(v)$. Then $\sdepth(S/I)\leq q$.
\end{Lemma}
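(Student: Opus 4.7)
The plan is to show that in every Stanley decomposition of $S/I$, some Stanley space has dimension at most $q$, which then forces $\sdepth(S/I)\leq q$.

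Fix an arbitrary Stanley decomposition
\[
\mathcal{D}\colon S/I=\bigoplus_{i=1}^{s} v_i\, K[W_i].
\]
Since $v\in S/I$ is nonzero (equivalently, $v\notin I$), the class of $v$ belongs to exactly one summand; say $v\in v_t\, K[W_t]$. Then $v=v_t\cdot u$ for a (unique) monomial $u\in K[W_t]$, which in particular forces $\supp(v_t)\subseteq \supp(v)$.

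The key step is to argue $W_t\subseteq \supp(v)$. Suppose, for contradiction, that there is some $j\in W_t$ with $j\notin \supp(v)$. Then $x_j v=v_t\cdot(u x_j)$, and since $v_t K[W_t]$ is a free $K[W_t]$-module with $u x_j$ a nonzero monomial of $K[W_t]$, the element $x_j v$ is a nonzero element of $v_t K[W_t]\subseteq S/I$. In other words $x_j v\notin I$, contradicting the hypothesis that $x_j v\in I$ for every $j\in [n]\setminus \supp(v)$. Therefore $W_t\subseteq \supp(v)$ and $|W_t|\leq q$, so $\sdepth(\mathcal{D})\leq |W_t|\leq q$. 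Taking the supremum over all Stanley decompositions yields $\sdepth(S/I)\leq q$.

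The only point requiring a bit of care is the freeness argument used to conclude $x_j v\neq 0$ in $S/I$; once one invokes that each Stanley space is a free $K[W_i]$-module and that the decomposition is a direct sum of $K$-vector spaces, the contradiction is immediate. No delicate combinatorial or inductive step is needed, so the proof should be short.
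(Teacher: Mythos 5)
Your argument is correct: since the monomial $v$ is nonzero in $S/I$, it lies in exactly one Stanley space $v_tK[W_t]$ of any given decomposition, and the hypothesis $x_jv\in I$ for $j\notin\supp(v)$ forces $W_t\subseteq\supp(v)$, hence $\sdepth(\mathcal{D})\le q$. The paper only cites this lemma from \cite{ZIA} without reproducing a proof, but your reasoning is the standard (and essentially the only) argument for it, so there is nothing to add.
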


\section{depth and Stanley of cyclic modules associated to the edge ideals of the powers of a path}
In this section, we compute depth and Stanley depth of $S/I(P_n^k)$. We start this section with the following elementary lemma:
\begin{Lemma}\label{led}
Let $a\geq 2$ be an integer, $\{E_i:1\leq i\leq a\}$ and $\{G_i:0\leq i\leq a\}$ be sequences of $\ZZ^n$-graged $S$ modules, and we have the following short exact sequences:
\begin{equation}\tag{1}\label{eq1}
0\longrightarrow E_1\longrightarrow G_0\longrightarrow G_1\longrightarrow 0
\end{equation}
\begin{equation}\tag{2}\label{eq2}
0\longrightarrow E_2\longrightarrow G_1\longrightarrow G_2\longrightarrow 0
\end{equation}
$$\vdots$$
\begin{equation}\tag{$a-1$} \label{eq$a-1$}
0\longrightarrow E_{a-1}\longrightarrow G_{a-2}\longrightarrow G_{a-1}\longrightarrow 0
\end{equation}
\begin{equation}\tag{$a$}\label{eq$a$}
0\longrightarrow E_a\longrightarrow G_{a-1}\longrightarrow G_a\longrightarrow 0
\end{equation}
such that $\depth(G_a)\geq \depth(E_a)$ and $\depth(E_{i})\geq \depth(E_{i-1})$, for all $2\leq i\leq a$, then $\depth(G_0)=\depth(E_1)$.
\end{Lemma}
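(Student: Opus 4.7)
The plan is to prove the two inequalities $\depth(G_0)\geq \depth(E_1)$ and $\depth(E_1)\geq \depth(G_0)$ separately, using the three parts of the Depth Lemma (Lemma \ref{le01}) in tandem with the two hypotheses. The central idea is that the monotonicity hypothesis $\depth(E_{i})\geq \depth(E_{i-1})$ ensures that at each step of the induction the minimum appearing in the depth lemma is attained at the ``$E_i$'' term, which allows the bound $\depth(E_i)$ to propagate from the bottom sequence all the way up to $G_0$.

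For the forward inequality, I would prove by downward induction on $i$ that $\depth(G_{i-1})\geq \depth(E_i)$ for every $1\leq i\leq a$. For the base case $i=a$, applying part (1) of the Depth Lemma to sequence $(a)$ gives
$$\depth(G_{a-1})\geq \min\{\depth(E_a),\depth(G_a)\}=\depth(E_a),$$
since by hypothesis $\depth(G_a)\geq \depth(E_a)$. For the inductive step, assuming $\depth(G_i)\geq \depth(E_{i+1})$, part (1) of the Depth Lemma applied to sequence $(i)$ yields
$$\depth(G_{i-1})\geq \min\{\depth(E_i),\depth(G_i)\}\geq \min\{\depth(E_i),\depth(E_{i+1})\}=\depth(E_i),$$
where the last equality uses $\depth(E_{i+1})\geq \depth(E_i)$. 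Specializing to $i=1$ gives $\depth(G_0)\geq \depth(E_1)$.

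For the reverse inequality, I would apply part (2) of the Depth Lemma to sequence (1):
$$\depth(E_1)\geq \min\{\depth(G_0),\depth(G_1)+1\}.$$
From the downward induction above (taking $i=2$ when $a\geq 2$) we have $\depth(G_1)\geq \depth(E_2)\geq \depth(E_1)$, so $\depth(G_1)+1>\depth(E_1)$. If the minimum on the right were $\depth(G_1)+1$, then $\depth(E_1)\geq \depth(G_1)+1\geq \depth(E_1)+1$, a contradiction. Therefore the minimum equals $\depth(G_0)$, giving $\depth(E_1)\geq \depth(G_0)$. Combined with the previous inequality, $\depth(G_0)=\depth(E_1)$.

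The main (only mild) obstacle is organizing the chain of short exact sequences so the induction propagates cleanly; once the right statement ``$\depth(G_{i-1})\geq \depth(E_i)$'' is formulated, each hypothesis is used exactly where needed, and the reverse inequality is forced by a single contradiction argument using part (2) of the Depth Lemma on the topmost sequence.
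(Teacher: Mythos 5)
Your proof is correct and follows essentially the same route as the paper: a bottom-up propagation along the chain of short exact sequences using the Depth Lemma, with the monotonicity hypothesis guaranteeing the minimum is attained at the $E_i$ term. The only cosmetic difference is that the paper records the equality $\depth(G_{i-1})=\depth(E_i)$ at every stage (using parts (1) and (2) together each time), whereas you propagate only the inequality $\depth(G_{i-1})\geq\depth(E_i)$ and invoke part (2) once, at the top sequence, to close the argument.
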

\begin{proof}
By assumption $\depth(G_a)\geq \depth(E_a)$, applying Depth Lemma on the exact sequence (\ref{eq$a$}) we get $\depth(G_{a-1})=\depth(E_a)$. We also have by assumption $$\depth(G_{a-1})=\depth(E_a)\geq \depth(E_{a-1}),$$ applying Depth Lemma to the exact sequence (\ref{eq$a-1$}) we have $\depth(G_{a-2})=\depth(E_{a-1})$. We repeat the same steps on all exact sequences one by one from bottom to top and we get $\depth(G_{i-1})=\depth(E_i)$ for all $i$. Thus if $i=1$ then we have $\depth(G_0)=\depth(E_1)$.
\end{proof}
\begin{Lemma}\label{le1}
Let $k\geq 2$ and $n\geq 2k+2$, then $S/(I(P_n^k),A_{n-1})\cong S_{n-k-1}/I(P_{n-k-1}^{k})[x_n]$.
\end{Lemma}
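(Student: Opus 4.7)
The plan is to interpret the quotient by $A_{n-1}=(x_{n-k},x_{n-k+1},\dots,x_{n-1})$ as setting these $k$ variables to zero, and then track which generators of $I(P_n^k)$ survive in the quotient.

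First I would observe that $S/A_{n-1}\cong K[x_1,\dots,x_{n-k-1},x_n]\cong S_{n-k-1}[x_n]$, since $A_{n-1}$ is generated by a subset of the variables. Under this identification, the image of $I(P_n^k)$ is the ideal generated by those $x_ix_j\in \mathcal{G}(I(P_n^k))$ with $\{i,j\}\cap\{n-k,n-k+1,\dots,n-1\}=\emptyset$. So the task reduces to listing those generators $x_ix_j$ (with $1\le i<j\le n$ and $j-i\le k$) for which both indices lie in $\{1,\dots,n-k-1\}\cup\{n\}$.

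Next I would split into two cases for such a surviving generator. If $j\le n-k-1$, then both $i,j\in\{1,\dots,n-k-1\}$ and $j-i\le k$, so $x_ix_j$ is precisely a generator of $I(P_{n-k-1}^k)$ viewed in $S_{n-k-1}$. The remaining possibility is $j=n$, which forces $i\ge n-k$; but we also need $i\le n-k-1$ to avoid $A_{n-1}$, and this is a contradiction. Hence no generator involving $x_n$ survives, and the surviving generators of $I(P_n^k)$ are in bijection with those of $I(P_{n-k-1}^k)$ (noting that $n-k-1\ge k+1$ under the hypothesis $n\ge 2k+2$, so $P_{n-k-1}^k$ is still a $k$-th power of a path in the sense of the case~(2) definition).

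Finally I would combine these observations:
\[
S/(I(P_n^k),A_{n-1})\;\cong\;S_{n-k-1}[x_n]/I(P_{n-k-1}^k)\;\cong\;\bigl(S_{n-k-1}/I(P_{n-k-1}^k)\bigr)[x_n],
\]
where the last step just uses that $x_n$ is a free variable not appearing in any surviving generator. No serious obstacle is expected; the argument is purely a verification that the index set $\{n-k,\dots,n-1\}$ covers exactly those indices adjacent to $n$ in $P_n^k$, so that killing these variables detaches the vertex $n$ from the remainder of the graph, which is $P_{n-k-1}^k$.
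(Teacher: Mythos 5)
Your argument is correct and is essentially the paper's own proof: the paper likewise absorbs into $A_{n-1}$ every generator $x_ix_j$ touching an index in $\{n-k,\dots,n-1\}$ (in particular every generator involving $x_n$, since $x_ix_n\in\mathcal{G}(I(P_n^k))$ forces $i\ge n-k$), leaving exactly $\mathcal{G}(I(P_{n-k-1}^k))$ and the free variable $x_n$. One harmless quibble: when $n=2k+2$ you have $n-k-1=k+1$, so $P_{n-k-1}^k$ falls under case~(1) of the definition (a complete graph) rather than case~(2), but the identification of the surviving generators with $\mathcal{G}(I(P_{n-k-1}^k))$ holds in either case.
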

\begin{proof}
We have $$\mathcal{G}(I(P_n^k))=\bigcup_{i=1}^{n-k}\big\{x_ix_{i+1},x_ix_{i+2},\dots,x_ix_{i+k}\big\} \bigcup\bigcup_{i=n-k+1}^{n-1}\big\{x_i x_{i+1},x_i x_{i+2},\dots,x_i x_n\big\}.$$
Then
\begin{multline*}
I(P_n^k)+A_{n-1}=\Big[\sum_{i=1}^{n-2k-1}(x_ix_{i+1},x_ix_{i+2},\dots,x_ix_{i+k}) +\sum_{i=n-2k}^{n-k}(x_ix_{i+1},x_ix_{i+2},\dots,x_ix_{i+k})+ \\ \sum_{i=n-k+1}^{n-1}(x_i x_{i+1},x_i x_{i+2},\dots,x_ix_n)\Big]+ A_{n-1}\\=\sum_{i=1}^{n-2k-1}\big(x_ix_{i+1},x_ix_{i+2},\dots,x_ix_{i+k}\big)+\\\sum_{i=n-2k}^{n-k-2}(x_i x_{i+1},x_i x_{i+2},\dots,x_i x_{n-k-1})+ A_{n-1}=I(P_{n-k-1}^k)+A_{n-1}.
\end{multline*}
Thus the required result follows.
\end{proof}
%
\begin{Lemma}\label{le2}
Let $k\geq 2$, $0\leq i\leq k-1$ and $n\geq 3k+2$, then $$S/(I(P_n^k):x_{n-k+i})\cong S_{n-2k-1+i}/I(P_{n-2k-1+i}^k)[x_{n-k+i}].$$
\end{Lemma}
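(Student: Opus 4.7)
The plan is to unravel the colon ideal $I(P_n^k):x_{n-k+i}$ generator by generator, and then identify the surviving variables and edge generators in the quotient. For a squarefree monomial ideal $I$ with generators $u_1,\ldots,u_s$ and a variable $x_c$, one has $I:x_c=(u_j/\gcd(u_j,x_c))$. Applied to the edge generators $x_ax_b$ with $c=n-k+i$, this gives $x_b$ when $c=a$, $x_a$ when $c=b$, and $x_ax_b$ otherwise. Hence
$$I(P_n^k):x_{n-k+i}=B_{n-k+i}+J,$$
where $J$ is generated by those edges of $P_n^k$ not incident to the vertex $n-k+i$.

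Next I would verify that the hypotheses $n\geq 3k+2$ and $0\leq i\leq k-1$ imply $1\leq n-2k+i$ and $n-k+i\leq n-1$, so the neighborhood $N_{P_n^k}(x_{n-k+i})$ is exactly $\{x_{n-2k+i},\ldots,x_{n-k+i-1},x_{n-k+i+1},\ldots,x_n\}$, matching the definition of $B_{n-k+i}$ given in Section 2.

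Passing to $S/(I(P_n^k):x_{n-k+i})$ sets all variables in $B_{n-k+i}$ to zero, so the surviving variables are $x_1,\ldots,x_{n-2k-1+i}$ together with $x_{n-k+i}$. Since every $P_n^k$-neighbor of $x_{n-k+i}$ lies in $B_{n-k+i}$, no generator of $J$ involves $x_{n-k+i}$ after the quotient. The surviving generators are therefore exactly the monomials $x_ax_b$ with $1\leq a<b\leq n-2k-1+i$ and $b-a\leq k$, which is precisely $\mathcal{G}(I(P_{n-2k-1+i}^k))$. This identification is legitimate because $n\geq 3k+2$ forces $n-2k-1+i\geq k+1$, so that $P_{n-2k-1+i}^k$ is a well-defined $k$-th path power. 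Combining all of this yields the claimed isomorphism
$$S/(I(P_n^k):x_{n-k+i})\cong K[x_1,\ldots,x_{n-2k-1+i},x_{n-k+i}]/I(P_{n-2k-1+i}^k)\cong S_{n-2k-1+i}/I(P_{n-2k-1+i}^k)[x_{n-k+i}].$$

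The proof is almost entirely index bookkeeping; the only real point to watch is that the interaction of the range $\{1,\ldots,n-2k-1+i\}$ with the length constraint $b-a\leq k$ reproduces exactly the edge set of $P_{n-2k-1+i}^k$ (and not accidentally a different graph). This is where the bound $n\geq 3k+2$ is used, and once that is checked the rest of the verification is mechanical.
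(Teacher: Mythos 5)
Your proposal is correct and follows essentially the same route as the paper: both compute $(I(P_n^k):x_{n-k+i})$ via the generator-by-generator formula $u\mapsto u/\gcd(u,x_{n-k+i})$, identify the neighborhood contribution with $B_{n-k+i}$, and observe that the remaining generators are exactly $\mathcal{G}(I(P_{n-2k-1+i}^k))$. Your extra bookkeeping about surviving variables and the bound $n\geq 3k+2$ is a slightly more explicit version of the paper's two-inclusion argument, but the substance is identical.
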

\begin{proof}
It is enough to prove that $(I(P_n^k):x_{n-k+i})=(I(P_{n-2k-1+i}^k),B_{n-k+i})$. Clearly $$I(P_{n-2k-1+i}^k)\subset I(P_{n}^k)\subset (I(P_{n}^k):x_{n-k+i}).$$ Let $u\in B_{n-k+i}$, then by definition of $I(P_n^k)$, $ux_{n-k+i}\in I(P_n^k)$ that is $u\in (I(P_n^k):x_{n-k+i})$. Thus $B_{n-k+i}\subset (I(P_n^k):x_{n-k+i})$ and we have $\big(I(P_{n-2k-1+i}^k),B_{n-k+i}\big)\subset (I(P_n^k):x_{n-k+i})$. Now for the other inclusion, let $w$ be a monomial generator of $(I(P_n^k):x_{n-k+i})$, then $w=\frac{v}{gcd(v,x_{n-k+i})}$, where $v\in\mathcal{G}(I(P_n^k))$. If $\supp(v)\cap \mathcal{G}(B_{n-k+i})\neq\emptyset$, then we have $w\in \mathcal{G}(B_{n-k+i})$ and if $\supp(v)\cap \mathcal{G}(B_{n-k+i})=\emptyset$ then $$w\in \mathcal{G}(I(P_{n}^k))\cap K[x_1,x_2,\dots,x_{n-2k-1+i}]=\mathcal{G}(I(P_{n-2k-1+i}^k)).$$
\end{proof}
\begin{Lemma}\label{cor1}
Let $n\geq 3k+2$ and $0\leq i\leq k-1$, then we have $$S/((I(P_{n}^k),A_{n-k+(i-1)}):x_{n-k+i})\cong S_{n-2k-1+i}/I(P_{n-2k-1+i}^k)[x_{n-k+i}].$$
\end{Lemma}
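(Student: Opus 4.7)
The plan is to reduce this corollary to Lemma \ref{le2} by showing that adding $A_{n-k+i-1}$ to $I(P_n^k)$ before taking the colon with $x_{n-k+i}$ does not change the resulting ideal.

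First I would use the standard fact that colon by a monomial distributes over a sum of monomial ideals:
\begin{equation*}
\bigl(I(P_n^k),\,A_{n-k+i-1}\bigr):x_{n-k+i} \;=\; \bigl(I(P_n^k):x_{n-k+i}\bigr) + \bigl(A_{n-k+i-1}:x_{n-k+i}\bigr).
\end{equation*}
Since $A_{n-k+i-1}=(x_{n-k},\dots,x_{n-k+i-1})$ is generated by variables all distinct from $x_{n-k+i}$, the colon $A_{n-k+i-1}:x_{n-k+i}$ equals $A_{n-k+i-1}$ itself. (For $i=0$ we have $A_{n-k-1}=(0)$, and the corollary reduces immediately to Lemma \ref{le2}, so I treat $i\geq 1$ in what follows.)

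Next, by Lemma \ref{le2} we have $\bigl(I(P_n^k):x_{n-k+i}\bigr)=\bigl(I(P_{n-2k-1+i}^k),\,B_{n-k+i}\bigr)$. I would then check that $A_{n-k+i-1}\subseteq B_{n-k+i}$. Indeed, the generators of $A_{n-k+i-1}$ are $x_{n-k},x_{n-k+1},\dots,x_{n-k+i-1}$, while $B_{n-k+i}$ contains the block $x_{n-2k+i},x_{n-2k+i+1},\dots,x_{n-k+i-1}$. The inequality $n-2k+i\leq n-k$ holds because $i\leq k-1<k$, so every generator of $A_{n-k+i-1}$ appears among the generators of $B_{n-k+i}$. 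Hence the sum collapses:
\begin{equation*}
\bigl(I(P_n^k),\,A_{n-k+i-1}\bigr):x_{n-k+i} \;=\; \bigl(I(P_{n-2k-1+i}^k),\,B_{n-k+i}\bigr) \;=\; \bigl(I(P_n^k):x_{n-k+i}\bigr).
\end{equation*}

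Finally, the desired isomorphism follows directly from Lemma \ref{le2}, since modding out by $B_{n-k+i}$ kills precisely the variables $x_{n-2k+i},\dots,x_{n-k+i-1},x_{n-k+i+1},\dots,x_n$, leaving the polynomial ring $K[x_1,\dots,x_{n-2k-1+i}][x_{n-k+i}]$ and the ideal $I(P_{n-2k-1+i}^k)$ inside its first factor. There is no real obstacle here: the only thing to verify carefully is the containment $A_{n-k+i-1}\subseteq B_{n-k+i}$, and the index bookkeeping for the boundary case $i=0$.
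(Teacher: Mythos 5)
Your proof is correct and follows essentially the same route as the paper: distribute the colon over the sum, apply Lemma \ref{le2} to identify $(I(P_n^k):x_{n-k+i})=(I(P_{n-2k-1+i}^k),B_{n-k+i})$, and absorb $A_{n-k+(i-1)}$ into $B_{n-k+i}$ via the containment $A_{n-k+(i-1)}\subset B_{n-k+i}$. Your explicit index check for that containment and the separate treatment of the $i=0$ case are just slightly more careful versions of what the paper leaves implicit.
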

\begin{proof}
Clearly $\big((I(P_{n}^k),A_{n-k+(i-1)}):x_{n-k+i}\big)=\big((I(P_{n}^k):x_{n-k+i}),A_{n-k+(i-1)}\big)$. Now using the proof of Lemma \ref{le2}
$$\big((I(P_{n}^k):x_{n-k+i}),A_{n-k+(i-1)}\big)=\big(I(P_{n-2k-1+i}^k),B_{n-k+i},A_{n-k+(i-1)}\big)=
\big(I(P_{n-2k-1+i}^k),B_{n-k+i}\big)$$
as $A_{n-k+(i-1)}\subset B_{n-k+i}$. Thus the required result follows by Lemma \ref{le2}.
\end{proof}
\begin{Remark}\label{rev}
{\em Let $m\geq 2$ and $I(P_{m}^{m-1})\subset S_m=K[x_1,x_2,\dots,x_m]$ be the edge ideal of the $(m-1)^{th}$ power of path $P_{m}$, then $I(P_{m}^{m-1})$ is a squarefree Veronese ideal of degree 2 in variables $x_1,x_2,\dots,x_m$. Thus by \cite[Corollary 10.3.7]{HH} and \cite[Theorem 1.1]{MC8} $$\depth(S_m/I(P_{m}^{m-1}))=\sdepth(S_m/I(P_{m}^{m-1}))=1.$$}
\end{Remark}
\begin{Remark}\label{re1}
{\em
Let $k\geq 2$ and $2k+2\leq n\leq 3k+1$, then it is easy to see that
\begin{description}
\item [(1)] If $n=2k+2$, then $S/(I(P_n^k):x_{n-k})=S/(x_2,\dots,x_{n-k-1},x_{n-k+1},\dots,x_n)\cong K[x_1,x_{n-k}].$
\item [(2)] If $0\leq i\leq k-1$ and $n>2k+2$, then
\begin{multline*}
$$S/(I(P_n^k):x_{n-k+i})=S/((I(P_n^k),A_{n-k+(i-1)}):x_{n-k+i})\\\cong S_{n-2k-1+i}/I(P_{n-2k-1+i}^{f(n-k+i)})=\left\{
                                                    \begin{array}{ll}
                                                      S_{n-2k-1+i}/I(P_{n-2k-1+i}^k), & \hbox{if $n-2k-1+i\geq k+1$;} \\\\
                                                      S_{n-2k-1+i}/I(P_{n-2k-1+i}^{n-2k-2+i}), & \hbox{otherwise.}
                                                    \end{array}
                                                  \right.$$
\end{multline*}
\end{description}
}
\end{Remark}
\begin{Theorem}\label{Th1}
Let $n\geq 2$, then $\depth(S/I(P_n^k))=\lceil\frac{n}{2k+1}\rceil$.
\end{Theorem}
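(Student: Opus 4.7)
The plan is to prove the formula by induction on $n$. For $n\le 2k+1$ (where $\lceil n/(2k+1)\rceil = 1$) I argue directly from an associated prime of codimension $n-1$; for $n\ge 2k+2$ I run Lemmas \ref{le1}, \ref{le2}, and \ref{cor1} through the chain-of-exact-sequences machinery of Lemma \ref{led}, obtained by successively adjoining $x_{n-k},x_{n-k+1},\ldots,x_{n-1}$ to $I(P_n^k)$.

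For the base case $2\le n\le 2k+1$ the inequality $\depth(S/I(P_n^k))\ge 1$ is automatic because $I(P_n^k)$ is a proper squarefree monomial ideal generated in degree $2$, so $\mm\notin\Ass(S/I(P_n^k))$. For the matching upper bound I note that any vertex $v\in[\max(1,n-k),\min(n,k+1)]$ (which is non-empty exactly because $n\le 2k+1$) is adjacent in $P_n^k$ to every other vertex; consequently $\{v\}$ is a maximal independent set and $P:=(x_j:j\ne v)$ is a minimal (hence associated) prime of $I(P_n^k)$, forcing $\depth(S/I(P_n^k))\le\dim(S/P)=1$.

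For the inductive step assume the formula for all smaller $n$ and take $n\ge 2k+2$. Set $I_0:=I(P_n^k)$ and $I_{j+1}:=(I_j,x_{n-k+j})=(I(P_n^k),A_{n-k+j})$ for $0\le j\le k-1$, so that $I_k=(I(P_n^k),A_{n-1})$; write $G_j:=S/I_j$ and $E_{j+1}:=S/(I_j:x_{n-k+j})$. The standard colon/quotient exact sequences
\[
0\to E_{j+1}\to G_j\to G_{j+1}\to 0,\qquad 0\le j\le k-1,
\]
feed into Lemma \ref{led} with $a=k$. Lemma \ref{le1} gives $G_k\cong S_{n-k-1}/I(P_{n-k-1}^k)[x_n]$, and the induction hypothesis then yields $\depth(G_k)=\lceil(n-k-1)/(2k+1)\rceil+1$. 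For $E_{j+1}$, Lemma \ref{cor1} applies when $n\ge 3k+2$, and Remark \ref{re1} (combined with Remark \ref{rev} whenever the exponent drops and the resulting path-power becomes a complete graph) handles the range $2k+2\le n\le 3k+1$; in every instance the induction hypothesis yields $\depth(E_{j+1})=\lceil(n-2k-1+j)/(2k+1)\rceil+1$. Monotonicity of the ceiling function ensures $\depth(G_k)\ge\depth(E_k)$ and $\depth(E_j)\ge\depth(E_{j-1})$ for $2\le j\le k$, so Lemma \ref{led} applies and gives
\[
\depth(S/I(P_n^k))=\depth(E_1)=\Big\lceil\tfrac{n-2k-1}{2k+1}\Big\rceil+1=\Big\lceil\tfrac{n}{2k+1}\Big\rceil.
\]

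The main obstacle is the bookkeeping in the intermediate range $2k+2\le n\le 3k+1$: there the modules $E_{j+1}$ can involve edge ideals of complete graphs $P_{n-2k-1+j}^{n-2k-2+j}$ rather than genuine path powers, and one has to verify via Remark \ref{rev} that the depth still takes the value $\lceil(n-2k-1+j)/(2k+1)\rceil+1$ predicted by the uniform formula, so that the monotonicity inequalities required by Lemma \ref{led} remain intact. Once that case analysis is dispensed with, the rest of the argument is a mechanical application of Lemma \ref{led} together with the identity $\lceil x\rceil+1=\lceil x+1\rceil$.
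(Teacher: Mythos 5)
Your argument is correct and follows essentially the same route as the paper: the same chain of colon/quotient short exact sequences in $x_{n-k},\ldots,x_{n-1}$ fed into Lemma \ref{led}, with Lemmas \ref{le1} and \ref{cor1} and Remarks \ref{rev}, \ref{re1} identifying the kernels and the final cokernel, and with your base case $n\le 2k+1$ merely replacing the paper's colon-ideal bound $\depth(S/I)\le\depth(S/(I:x_{k+1}))$ by the equivalent observation that $(x_j:j\ne v)$ is a height-$(n-1)$ associated prime. The one point to tidy up is $k=1$: Lemmas \ref{le1}, \ref{le2}, \ref{cor1} are stated only for $k\ge 2$ and Lemma \ref{led} only for $a\ge 2$, so for $k=1$ you should either note that the chain degenerates to a single exact sequence handled directly by the Depth Lemma, or cite \cite[Lemma 2.8]{SM1} as the paper does.
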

\begin{proof}
\begin{description}
\item[(a)] Let $n\leq k+1$, then $I(P_n^k)$ is a square free Veronese ideal thus by Remark \ref{rev}, $\depth(S/I(P_n^k))=1=\lceil\frac{n}{2k+1}\rceil$.
\item[(b)] Let $n\geq k+2$, we consider the following cases:
\begin{description}
\item[(1)] If $k=1$, then by \cite[Lemma 2.8]{SM1} we have $\depth(S/I(P_n^1))=\lceil\frac{n}{3}\rceil=\lceil\frac{n}{2k+1}\rceil$.
\item[(2)] If $k\geq 2$, $k+2\leq n\leq 2k+1$, then we have $\depth(S/I(P_n^k))\geq 1$ as $\mathfrak{m}\notin \Ass(S/I(P_n^k))$. Since $x_{k+1}\notin I(P_n^k)$ and $x_sx_{k+1}\in \mathcal{G}(I(P_n^k))$ for all $s\in \{1,\dots k,k+2,\dots,n\}$, therefore $$(I(P_n^k):x_{k+1})=(x_1,\dots x_k,x_{k+2},\dots,x_n).$$
By \cite[Corollary 1.3]{AR1}, we have
$$\depth(S/I(P_n^k))\leq \depth(S/(I(P_n^k):x_{k+1}))=\depth(S/(x_1,\dots x_k,x_{k+2},\dots,x_n))=1.$$
Thus $\depth(S/I(P_n^k))=1=\lceil\frac{n}{2k+1}\rceil$.
\item[(3)] If $k\geq 2$, $2k+2\leq n\leq 3k+1$, now consider the following short exact sequences:
$$0\longrightarrow S/(I(P_n^k):x_{n-k})\xrightarrow[]{\cdot x_{n-k}} S/I(P_n^k)\longrightarrow S/(I(P_n^k),x_{n-k})\longrightarrow0$$
$$0\longrightarrow S/((I(P_n^k),A_{n-k}):x_{n-k+1})\xrightarrow[]{\cdot x_{n-k+1}} S/(I(P_n^k),A_{n-k})\longrightarrow S/(I(P_n^k),A_{n-k+1})\longrightarrow0$$
$$\vdots$$
$$0\longrightarrow S/((I(P_n^k),A_{n-k+(i-1)}):x_{n-k+i})\xrightarrow[]{\cdot x_{n-k+i}} S/(I(P_n^k),A_{n-k+(i-1)})\longrightarrow S/(I(P_n^k),A_{n-k+i})\longrightarrow0$$
$$\vdots$$
$$0\longrightarrow S/((I(P_n^k),A_{n-2}):x_{n-1})\xrightarrow[]{\cdot x_{n-1}} S/(I(P_n^k),A_{n-2})\longrightarrow S/(I(P_n^k),A_{n-1})\longrightarrow0.$$\\
By Lemma \ref{le1}, $S/(I(P_n^k),A_{n-1})\cong S_{n-k-1}/I(P_{n-k-1}^k)[x_{n}]$. Since $k+1\leq n-k-1\leq 2k$, thus if $n-k-1=k+1$ then by case(a) and \cite[Lemma 3.6]{HVZ}, otherwise by case(b)(2) and \cite[Lemma 3.6]{HVZ} we have $$\depth\big(S/(I(P_n^k),A_{n-1})\big)=2.$$
Now we show that $\depth\big(S/(I(P_n^k):x_{n-k})\big)=2$, for this we consider two cases:\\
If $n=2k+2$, then by Remark \ref{re1}
$$S/(I(P_n^k):x_{n-k})=S/(x_2,x_3,\dots,x_{n-k-1},x_{n-k+1},\dots,x_n)\cong K[x_1,x_{n-k}],$$
and thus $\depth\big(S/(I(P_n^k):x_{n-k})\big)=2$.\\
If $n>2k+2$, by Remark \ref{re1} we have
$$S/(I(P_n^k):x_{n-k})\cong S_{n-2k-1}/I(P_{n-2k-1}^{n-2k-2})[x_{n-k}],$$
where $2\leq n-2k-1\leq k$. Thus by Remark \ref{rev} and \cite[Lemma 3.6]{HVZ} we get $$\depth\big(S/(I(P_n^k):x_{n-k})\big)=2.$$
Now let $1\leq i\leq k-1$, by Remark \ref{re1}
\begin{multline*}
$$S/((I(P_n^k),A_{n-k+(i-1)}):x_{n-k+i})=S/(I(P_n^k):x_{n-k+i})\cong S_{n-2k-1+i}/I(P_{n-2k-1+i}^{f(n-k+i)})[x_{n-k+i}].$$
\end{multline*}
Let $T:=S_{n-2k-1+i}/I(P_{n-2k-1+i}^{f(n-k+i)})[x_{n-k+i}]$, we consider the following cases:
\begin{description}
\item [(i)] If $k+1=n-2k-1+i$, then $T=S_{k+1}/I(P_{k+1}^k)[x_{n-k+i}]$, thus by case(a) and \cite[Lemma 3.6]{HVZ} we have $\depth(T)=2$.\\
\item [(ii)] If $k+1<n-2k-1+i$, then $T=S_{n-2k-1+i}/I(P_{n-2k-1+i}^k)[x_{n-k+i}]$. Since $k+2\leq n-2k-1+i\leq 2k-1$, thus by case(b)(2) and \cite[Lemma 3.6]{HVZ} we have $\depth(T)=2$.\\
\item [(iii)] If $2\leq n-2k-1+i< k+1$, then $T=S_{n-2k-1+i}/I(P_{n-2k-1+i}^{n-2k-2+i})[x_{n-k+i}]$, by Remark \ref{rev} and \cite[Lemma 3.6]{HVZ} we have $\depth(T)=2$.\\
\end{description}
Thus by Lemma \ref{led} we have $\depth(S/I(P_n^k))=2$.\\
\item[(4)] If $k\geq 2$, $n\geq 3k+2$, then we consider the family of short exact sequences:
  $$0\longrightarrow S/(I(P_n^k):x_{n-k})=S/((I(P_n^k),A_{n-k-1}):x_{n-k}))\xrightarrow[]{\cdot x_{n-k}} S/I(P_n^k)\longrightarrow S/(I(P_n^k),A_{n-k})\longrightarrow 0$$
  $$0\longrightarrow S/((I(P_n^k),A_{n-k}):x_{n-k+1})\xrightarrow[]{\cdot x_{n-k+1}} S/(I(P_n^k),A_{n-k})\longrightarrow S/(I(P_n^k),A_{n-k+1})\longrightarrow 0$$
   $$\vdots$$
   $$0\longrightarrow S/((I(P_n^k),A_{n-k+(i-1)}):x_{n-k+i})\xrightarrow[]{\cdot x_{n-k+i}} S/(I(P_n^k),A_{n-k+(i-1)})\longrightarrow S/(I(P_n^k),A_{n-k+i})\longrightarrow 0$$
   $$\vdots$$
   $$0\longrightarrow S/((I(P_n^k),A_{n-2}):x_{n-1})\xrightarrow[]{\cdot x_{n-1}} S/(I(P_n^k),A_{n-2})\longrightarrow S/(I(P_n^k),A_{n-1})\longrightarrow 0.$$\\
By Lemma \ref{le1}, $S/(I(P_n^k),A_{n-1}))\cong S_{n-k-1}/I(P_{n-k-1}^k)[x_n].$
Thus by induction on $n$ and \cite[Lemma 3.6]{HVZ} we have
$$\depth (S/(I(P_n^k),A_{n-1}))=\lceil\frac{n-k-1}{2k+1}\rceil+1.$$
Let $0\leq i\leq k-1$, by Lemma \ref{cor1} we have
$$S/((I(P_n^k),A_{n-k+(i-1)}):x_{n-k+i})\cong S_{n-2k-1+i}/I(P_{n-2k-1+i}^k)[x_{n-k+i}].$$
Thus by induction on $n$ and \cite[Lemma 3.6]{HVZ} we have $$\depth(S/((I(P_n^k),A_{n-k+(i-1)}):x_{n-k+i}))=\lceil\frac{n-2k-1+i}{2k+1}\rceil+1.$$
We can see that $$\depth (S/(I(P_n^k),A_{n-1}))=\lceil\frac{n-k-1}{2k+1}\rceil+1\geq \lceil\frac{n-k-2}{2k+1}\rceil+1=\depth (S/(I(P_n^k),A_{n-2}):x_{n-1})).$$
And for all $1\leq i\leq k-1$,
\begin{multline*}
\depth(S/((I(P_n^k),A_{n-k+(i-1)}):x_{n-k+i}))=\lceil\frac{n-2k-1+i}{2k+1}\rceil+1\geq \\ \lceil\frac{n-2k-2+i}{2k+1}\rceil+1=\depth(S/((I(P_n^k),A_{n-k+(i-2)}):x_{n-k+(i-1)})).
\end{multline*}
Thus by Lemma \ref{led} we have $$\depth(S/I(P_n^k))=\lceil\frac{n-2k-1}{2k+1}\rceil+1=\lceil\frac{n}{2k+1}\rceil.$$
\end{description}
\end{description}
\end{proof}

\begin{Lemma} \cite[Lemma 4]{AS}\label{PrPFY}
Let $n\geq 2$, then $\sdepth(S/I(P_n^1))=\lceil\frac{n}{3}\rceil$.
\end{Lemma}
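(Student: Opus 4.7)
The plan is to prove the two inequalities $\sdepth(S/I(P_n^1)) \geq \lceil n/3 \rceil$ and $\sdepth(S/I(P_n^1)) \leq \lceil n/3 \rceil$ separately.

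For the lower bound, I would argue by induction on $n$, with base cases $n \in \{2, 3\}$ verified by exhibiting explicit Stanley decompositions (for $n=2$, $S/I(P_2^1) = K[x_1] \oplus x_2 K[x_2]$; for $n=3$, $S/I(P_3^1) = K[x_1,x_3] \oplus x_2 K[x_2]$). For $n \geq 4$, the short exact sequence
$$0 \longrightarrow S/(I(P_n^1):x_{n-1}) \xrightarrow[]{\cdot x_{n-1}} S/I(P_n^1) \longrightarrow S/(I(P_n^1), x_{n-1}) \longrightarrow 0$$
together with Lemma \ref{le02} reduces the problem to bounding the Stanley depths of the two outer terms. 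A direct inspection of the generators gives $(I(P_n^1):x_{n-1}) = (x_{n-2}, x_n, I(P_{n-3}^1))$ and $(I(P_n^1), x_{n-1}) = (x_{n-1}, I(P_{n-2}^1))$, where the smaller path ideals live on initial variables. Hence
$$S/(I(P_n^1):x_{n-1}) \cong (S_{n-3}/I(P_{n-3}^1))[x_{n-1}], \quad S/(I(P_n^1), x_{n-1}) \cong (S_{n-2}/I(P_{n-2}^1))[x_n].$$
By the inductive hypothesis together with \cite[Lemma 3.6]{HVZ}, these have Stanley depths $\lceil (n-3)/3 \rceil + 1 = \lceil n/3 \rceil$ and $\lceil (n-2)/3 \rceil + 1 \geq \lceil n/3 \rceil$, respectively, delivering the required lower bound.

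For the upper bound, I would apply Lemma \ref{zia} with a squarefree monomial $v$ whose support is a maximal independent set of $P_n$ of size exactly $\lceil n/3 \rceil$. Writing $n = 3m + r$ with $r \in \{0, 1, 2\}$, a concrete choice is $v = x_2 x_5 \cdots x_{3m-1}$ when $r = 0$, and $v = x_1 x_4 \cdots x_{3m+1}$ when $r \in \{1, 2\}$. In each case every vertex outside $\supp(v)$ is adjacent in $P_n$ to some vertex of $\supp(v)$, so $x_j v \in I(P_n^1)$ for all $j \in [n] \setminus \supp(v)$, and Lemma \ref{zia} yields $\sdepth(S/I(P_n^1)) \leq \lceil n/3 \rceil$.

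The only delicate point is the endpoint bookkeeping for the independent set when $n \equiv 0 \pmod{3}$: the naive arithmetic progression $\{1, 4, \dots, 3m-2\}$ fails because the vertex $n = 3m$ is not dominated, so one must shift the starting vertex to $2$. Once this endpoint issue is handled, the two bounds coincide and the lemma follows.
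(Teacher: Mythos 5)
Your proof is correct, but note that the paper does not prove this statement at all: it is quoted as \cite[Lemma 4]{AS}, so there is no internal proof to compare against. What you have written is, in effect, the $k=1$ specialization of the machinery the authors deploy for general $k$ in Theorems \ref{Th1}, \ref{Th11} and Proposition \ref{Pr11}: a short exact sequence obtained by coloning out a vertex near the end of the path (your $x_{n-1}$ plays the role of their $x_{n-k+i}$), Lemma \ref{le02} plus \cite[Lemma 3.6]{HVZ} for the lower bound, and Lemma \ref{zia} applied to a minimum maximal independent set for the upper bound. Your identifications $(I(P_n^1):x_{n-1})=(x_{n-2},x_n,I(P_{n-3}^1))$ and $(I(P_n^1),x_{n-1})=(x_{n-1},I(P_{n-2}^1))$ are correct, the arithmetic $\lceil\frac{n-3}{3}\rceil+1=\lceil\frac{n}{3}\rceil$ and $\lceil\frac{n-2}{3}\rceil+1\geq\lceil\frac{n}{3}\rceil$ checks out, and your witness monomials are genuine independent dominating sets of the right size, including the shift to start at $x_2$ when $3\mid n$. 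The only loose end is the induction step at $n=4$ (and the hypotheses of Lemma \ref{zia} require $\supp(I)=[n]$, which holds here): there $n-3=1$ and the paper's conventions exclude one-vertex graphs, so you should either add $n=4$ as a base case or adopt the convention $I(P_1^1)=(0)$, under which $S/(I(P_4^1):x_3)\cong K[x_1,x_3]$ has Stanley depth $2=\lceil\frac{4}{3}\rceil$ and the argument goes through. The value of your version is that it makes the paper self-contained at $k=1$; the cost is only this small bookkeeping at the bottom of the induction.
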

\begin{Example}\label{Ex1}
Let $n\geq 2$, and $n\leq 2k+1$, then $\sdepth(S/I(P_n^k))=1$.
\end{Example}
\begin{proof}
If $n\leq k+1$, then by \cite[Theorem 1.1]{MC8} $\sdepth(S/I(P_{n}^k))=1$. Now let $k+2\leq n\leq 2k+1$, then $\depth(S/I(P_n^k))\geq 1$ as $\mathfrak{m}\notin \Ass(S/I(P_n^k))$, thus by \cite[Theorem 1.4]{MC5} $\sdepth(S/I(P_n^k))\geq 1$. Since $x_{k+1}\notin I(P_n^k)$ and $x_ix_{k+1}\in \mathcal{G}(I(P_n^k))$ for all $i\in \{1,\dots k,k+2,\dots,n\}$, therefore $$(I(P_n^k):x_{k+1})=(x_1,\dots x_k,x_{k+2},\dots,x_n).$$
Thus by \cite[Proposition 2.7]{MC} $$\sdepth(S/I(P_n^k))\leq \sdepth(S/(I(P_n^k):x_{k+1}))=\sdepth(S/(x_1,\dots x_k,x_{k+2},\dots,x_n))=1.$$
\end{proof}
\begin{Proposition}\label{Pr11}
Let $k\geq 2$ and $n\geq 2k+2$, then $\sdepth(S/I(P_n^k))\geq\lceil\frac{n}{2k+1}\rceil$.
\end{Proposition}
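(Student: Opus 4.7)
The plan is to mimic the structure of the proof of Theorem \ref{Th1}, replacing the Depth Lemma by Lemma \ref{le02}. A repeated application of Lemma \ref{le02} to a cascade of short exact sequences
\[
0\longrightarrow E_i\longrightarrow G_{i-1}\longrightarrow G_i\longrightarrow 0,\qquad i=1,\ldots,a,
\]
immediately yields $\sdepth(G_0)\geq \min\{\sdepth(E_1),\ldots,\sdepth(E_a),\sdepth(G_a)\}$; unlike the depth case, no auxiliary statement in the spirit of Lemma \ref{led} is needed, since we only aim for a lower bound. I would proceed by induction on $n$, taking the range $2k+2\leq n\leq 3k+1$ as the base case.

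For the base range, I would reuse the family of short exact sequences displayed in case (b)(3) of Theorem \ref{Th1}. By Remark \ref{re1}, each subobject $S/((I(P_n^k),A_{n-k+(i-1)}):x_{n-k+i})$ is isomorphic either to $K[x_1,x_{n-k}]$ or to $S_{n-2k-1+i}/I(P_{n-2k-1+i}^{f(n-k+i)})[x_{n-k+i}]$, and Lemma \ref{le1} identifies the final right-hand quotient with $S_{n-k-1}/I(P_{n-k-1}^k)[x_n]$. In every such case the underlying module (before adjoining the free variable) is either a squarefree Veronese quotient covered by Remark \ref{rev}, or a quotient $S_m/I(P_m^k)$ with $k+2\leq m\leq 2k+1$ covered by Example \ref{Ex1}, so its Stanley depth is at least $1$. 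Adjoining the free variable then adds $1$ by the Stanley-depth part of \cite[Lemma 3.6]{HVZ}, so each term of the cascade has Stanley depth at least $2=\lceil\frac{n}{2k+1}\rceil$, and the cascade inequality closes the base case.

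For the inductive step $n\geq 3k+2$, I would invoke the sequences of case (b)(4). By Lemma \ref{cor1}, the subobjects simplify to $S_{n-2k-1+i}/I(P_{n-2k-1+i}^k)[x_{n-k+i}]$ for $0\leq i\leq k-1$, while Lemma \ref{le1} gives that the final right-hand quotient is $S_{n-k-1}/I(P_{n-k-1}^k)[x_n]$. For each reduced index $m\in\{n-2k-1,\ldots,n-k-1\}$ I would distinguish whether $m\leq 2k+1$, in which case Example \ref{Ex1} yields $\sdepth(S_m/I(P_m^k))\geq 1$, or $m\geq 2k+2$, in which case the induction hypothesis gives $\sdepth(S_m/I(P_m^k))\geq \lceil\frac{m}{2k+1}\rceil$. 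Adjoining the free variable then produces a Stanley depth at least $\lceil\frac{m}{2k+1}\rceil+1$, and the routine identities
\[
\left\lceil\frac{n-2k-1+i}{2k+1}\right\rceil+1=\left\lceil\frac{n+i}{2k+1}\right\rceil\geq\left\lceil\frac{n}{2k+1}\right\rceil,\qquad \left\lceil\frac{n-k-1}{2k+1}\right\rceil+1=\left\lceil\frac{n+k}{2k+1}\right\rceil\geq\left\lceil\frac{n}{2k+1}\right\rceil
\]
show that every term of the cascade has Stanley depth at least $\lceil\frac{n}{2k+1}\rceil$, finishing the induction.

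The only genuine subtlety is bookkeeping: one has to check, for every $i$ in the ranges considered, that the reduced index $n-2k-1+i$ (or $n-k-1$) either lies in a range where Example \ref{Ex1} or Remark \ref{rev} applies directly, or satisfies $\geq 2k+2$ so that the induction hypothesis can be used, and that in both subcases the $+1$ contributed by the extra free variable is enough to recover $\lceil\frac{n}{2k+1}\rceil$. Once this case split is laid out, everything else is a direct transcription of the depth argument with Lemma \ref{le02} in place of Lemma \ref{le01}.
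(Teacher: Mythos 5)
Your proposal is correct and is essentially the paper's own argument: the paper's proof of Proposition \ref{Pr11} simply applies Lemma \ref{le02} to the cascades of short exact sequences from cases (b)(3) and (b)(4) of Theorem \ref{Th1}, with the lower bounds on the Stanley depth of the sub- and quotient modules supplied by Remark \ref{rev}, Example \ref{Ex1}, induction on $n$, and \cite[Lemma 3.6]{HVZ}, exactly as you lay out. Your write-up just makes explicit the bookkeeping that the paper leaves implicit.
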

\begin{proof}
\begin{description}
\item[(1)] If $2k+2\leq n\leq 3k+1$, then we apply Lemma \ref{le02} on the exact sequences in case(b)(3) of Theorem \ref{Th1} and we get $\sdepth(S/I(P_n^k))\geq 2=\lceil\frac{n}{2k+1}\rceil$.
\item[(2)] If $n\geq 3k+2$, then the proof is similar to Theorem \ref{Th1}. We apply Lemma \ref{le02} on the exact sequences in case(b)(4) of Theorem \ref{Th1} we get
\begin{multline*}
\sdepth(S/I(P_n^k))\geq\min\big\{\sdepth(S/(I(P_n^k):x_{n-k})),\sdepth(S/(I(P_n^k),A_{n-1})),\\
\min_{i=1}^{k-1}\{\sdepth(S/((I(P_n^k),A_{n-k+(i-1)}):x_{n-k+i}))\}\big\}\geq\lceil\frac{n}{2k+1}\rceil.
\end{multline*}
\end{description}
\end{proof}
\begin{Theorem}\label{Th11}
Let $n\geq 2$, then $\sdepth(S/I(P_{n}^k))=\lceil\frac{n}{2k+1}\rceil.$
\end{Theorem}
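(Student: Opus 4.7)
The lower bound $\sdepth(S/I(P_n^k))\geq\lceil n/(2k+1)\rceil$ is essentially done. When $n\leq 2k+1$, Example~\ref{Ex1} already gives the exact value $1=\lceil n/(2k+1)\rceil$. When $k=1$ and $n\geq 4$, Lemma~\ref{PrPFY} supplies the exact value. When $k\geq 2$ and $n\geq 2k+2$, Proposition~\ref{Pr11} supplies the lower bound. So the plan reduces to establishing the matching upper bound $\sdepth(S/I(P_n^k))\leq \lceil n/(2k+1)\rceil$ in the remaining range $k\geq 2$, $n\geq 2k+2$.

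For the upper bound, the plan is to apply Lemma~\ref{zia}: it suffices to exhibit a squarefree monomial $v=x_{i_1}x_{i_2}\cdots x_{i_q}\in S/I(P_n^k)$ of degree $q=\lceil n/(2k+1)\rceil$ such that $x_j v\in I(P_n^k)$ for every $j\in[n]\setminus\{i_1,\dots,i_q\}$. In graph-theoretic language, the indices $\{i_1,\dots,i_q\}$ must form an independent dominating set of the graph $P_n^k$, since independence guarantees $v\notin I(P_n^k)$ and domination guarantees the multiplication property.

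The construction is explicit. Write $n=(q-1)(2k+1)+r$ with $1\leq r\leq 2k+1$. If $r\geq k+1$, take
$$V=\bigl\{(j-1)(2k+1)+k+1 : j=1,2,\dots,q\bigr\};$$
consecutive members differ by $2k+1>k$, and the interval dominated by the $j$-th member is $[(j-1)(2k+1)+1,\,j(2k+1)]$, so together they cover $[1,q(2k+1)]\supseteq[1,n]$. If $r\leq k$, replace the last element by $n$, i.e.\ take
$$V=\bigl\{(j-1)(2k+1)+k+1 : j=1,\dots,q-1\bigr\}\cup\{n\};$$
the gap between $n$ and the previous element equals $k+r>k$, ensuring independence, and the last element dominates $[n-k,n]$, which connects to $[1,(q-1)(2k+1)]$ because $n-k=(q-1)(2k+1)+r-k\leq(q-1)(2k+1)+1$. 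Setting $v:=\prod_{i\in V}x_i$ then satisfies the hypotheses of Lemma~\ref{zia}, yielding $\sdepth(S/I(P_n^k))\leq q$.

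The main technical nuisance is the boundary bookkeeping in the case $r\leq k$: one has to verify that the ad-hoc last vertex $n$ is still at distance greater than $k$ from its predecessor and simultaneously dominates the tail $[(q-1)(2k+1)+1,n]$. Both conditions rely on the identity $k+r>k$ together with the inequality $r\leq k+1$ that is automatic in this case. Once this case analysis is carried out, combining the upper bound from Lemma~\ref{zia} with the lower bound from Proposition~\ref{Pr11} (or Example~\ref{Ex1}/Lemma~\ref{PrPFY} in the leftover ranges) gives the desired equality.
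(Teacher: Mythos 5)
Your proof is correct and follows essentially the same route as the paper: the lower bound is assembled from Lemma~\ref{PrPFY}, Example~\ref{Ex1} and Proposition~\ref{Pr11}, and the upper bound comes from Lemma~\ref{zia} applied to the monomial supported on the independent dominating set $\{k+1,3k+2,\dots\}$ (with the last vertex adjusted to $n$ when the residue is small), which is exactly the witness the paper uses, merely organized into two cases instead of three.
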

\begin{proof}
By Lemma \ref{PrPFY}, Example \ref{Ex1} and Proposition \ref{Pr11}, it is enough to prove, if $k\geq 2$ then $$\sdepth(S/I(P_n^k))\leq \lceil\frac{n}{2k+1}\rceil, \text{ for } n\geq2k+2.$$ We consider the following three cases:
\begin{description}
\item [(1)] Let $n=(2k+1)l$, where $l\geq 1$. We see that $$v=x_{k+1}x_{3k+2}x_{5k+3}\cdots x_{(2k+1)l-k}\in S\backslash I(P_n^k),$$ but $x_{t_{1}} v\in I(P_{n}^k)$, for all $t_{1}\in[n]\backslash\supp(v)$, by Lemma \ref{zia}, $\sdepth(S/I(P_{n}^k))\leq l= \lceil\frac{n}{2k+1}\rceil$.\\
\item [(2)] Let $n=(2k+1)l+r$, where $r\in\{1,2,3,\dots,k+1\}$ and $l\geq 1$. We have  $$v=x_{k+1}x_{3k+2}x_{5k+3}\cdots x_{(2k+1)l-k}x_{(2k+1)l+r}\in S\backslash I(P_n^k),$$ but $x_{t_{2}}v\in I(P_{n}^k)$, for all $t_{2}\in[n]\backslash\supp(v)$, by Lemma \ref{zia}, $$\sdepth(S/I(P_{n}^k))\leq l+1= \lceil\frac{n}{2k+1}\rceil.$$
\item[(3)] Let $n=(2k+1)l+s,$ where $s\in\{k+2,k+3,\dots,2k\}$ and $l\geq 1$. Since $$v=x_{k+1}x_{3k+2}x_{5k+3}\cdots x_{(2k+1)l+k+1}\in S\backslash I(P_n^k),$$ but $x_{t_{3}} v\in I(P_{n}^k)$, for all $t_{3}\in[n]\backslash\supp(v)$, by Lemma \ref{zia}, $$\sdepth(S/I(P_{n}^k))\leq l+1= \lceil\frac{n}{2k+1}\rceil.$$
\end{description}
\end{proof}

\section{dept and Stanley depth of cyclic modules associated to the edge ideals of the powers of a cycle}




In this section, we compute bounds for depth and Stanley depth for cyclic modules associated to the edge ideals of powers of a cycle.
\begin{Lemma}\label{le111}
Let $k\geq 2$ and $n\geq 3k+2$, then $S/(I(C_n^k),A_{n-1})\cong S_{n-k}/I(P_{n-k}^{k})$.
\end{Lemma}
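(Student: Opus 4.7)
My plan is to compute $I(C_n^k)+A_{n-1}$ explicitly and then exhibit a variable-renaming isomorphism. Using the description of the edge ideal of a cycle power given earlier, I decompose $I(C_n^k)=I(P_n^k)+J$, where $J$ is generated by the extra cyclic monomials $x_l x_j$ with $l\in\{1,\dots,k\}$ and $j\in\{l+n-k,\dots,n\}$. Every such $j$ satisfies $n-k\leq j\leq n$, so whenever $j\leq n-1$ the variable $x_j$ already belongs to $A_{n-1}$ and the whole monomial $x_l x_j$ lies in $A_{n-1}$; hence modulo $A_{n-1}$ the only surviving generators of $J$ are $x_1 x_n, x_2 x_n,\dots, x_k x_n$. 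Invoking Lemma~\ref{le1}, which gives $I(P_n^k)+A_{n-1}=I(P_{n-k-1}^k)+A_{n-1}$, I obtain
\[
I(C_n^k)+A_{n-1}=I(P_{n-k-1}^k)+A_{n-1}+(x_1 x_n, x_2 x_n,\dots, x_k x_n).
\]

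Passing to the quotient kills exactly $x_{n-k},\dots,x_{n-1}$, so $S/(I(C_n^k),A_{n-1})$ is canonically identified with $K[x_1,\dots,x_{n-k-1},x_n]/(I(P_{n-k-1}^k)+(x_1 x_n,\dots, x_k x_n))$. To match this with $S_{n-k}/I(P_{n-k}^k)=K[y_1,\dots,y_{n-k}]/I(P_{n-k}^k)$, I would define the $K$-algebra isomorphism $\varphi\colon K[y_1,\dots,y_{n-k}]\to K[x_1,\dots,x_{n-k-1},x_n]$ by $\varphi(y_1)=x_n$ and $\varphi(y_i)=x_{i-1}$ for $2\leq i\leq n-k$. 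Since $n\geq 3k+2$ forces $n-k\geq 2k+2$, the ideal $I(P_{n-k}^k)$ falls under case~(2) of its definition, so its generators split into those with $i\geq 2$, which $\varphi$ sends bijectively onto the generators of $I(P_{n-k-1}^k)$ after shifting indices down by one, and those with $i=1$, namely $y_1 y_2,\dots,y_1 y_{k+1}$, which $\varphi$ sends exactly to $x_n x_1,\dots,x_n x_k$.

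The main work is purely bookkeeping: verifying that only the $j=n$ case of the extra cyclic edges survives reduction modulo $A_{n-1}$, and that under $\varphi$ the shifted $P_{n-k-1}^k$-edges together with the $k$ new edges incident to $y_1$ account for every generator of $I(P_{n-k}^k)$. The hypothesis $n\geq 3k+2$ is used to guarantee both the $n\geq 2k+2$ assumption of Lemma~\ref{le1} and that $I(P_{n-k-1}^k), I(P_{n-k}^k)$ admit the ``long-path'' description in case~(2), which makes the match under $\varphi$ transparent.
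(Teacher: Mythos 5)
Your proposal is correct and follows essentially the same route as the paper: split $I(C_n^k)$ into $I(P_n^k)$ plus the extra cyclic edges, absorb all of those except $x_1x_n,\dots,x_kx_n$ into $A_{n-1}$, reduce the path part via Lemma~\ref{le1}, and renumber variables. Your only addition is making the renumbering isomorphism explicit (sending $y_1\mapsto x_n$), which the paper leaves implicit; the verification that this carries $I(P_{n-k}^k)$ onto $I(P_{n-k-1}^k)+(x_1x_n,\dots,x_kx_n)$ is accurate.
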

\begin{proof}
We have \\
$$\mathcal{G}(I(C_n^k))=\mathcal{G}(I(P_n^k))\bigcup\bigcup_{l=1}^{k-1}\{x_l x_{l+n-k},x_l x_{l+n-k+1},\dots,x_l  x_{n-1}\}\bigcup\{x_1x_{n},x_2x_n,\dots,x_{k}x_n\}.$$
Then
$$I(C_n^k)+A_{n-1}=I(P_n^k)+\sum_{l=1}^{k-1}(x_l x_{l+n-k},x_l x_{l+n-k+1},\dots,x_l x_{n-1})+(x_1x_{n},x_2x_n,\dots,x_{k}x_n)+A_{n-1}.$$
Thus by the proof of Lemma \ref{le1}, we have
$$I(P_n^k)+A_{n-1}=I(P_{n-k-1}^{k})+A_{n-1}.$$ And $$\sum_{l=1}^{k-1}(x_l x_{l+n-k},x_l x_{l+n-k+1},\dots,x_l x_{n-1})+A_{n-1}=A_{n-1}.$$ Therefore
\begin{multline*}
$$S/(I(C_n^k),A_{n-1})=S/(I(P_{n-k-1}^{k}),A_{n-1},(x_1x_{n},x_2x_n,\dots,x_{k}x_n))\\\cong K[x_1,x_2,\dots,x_{n-k-1},x_n]/\big(I(P_{n-k-1}^{k}),(x_1x_{n},x_2x_n,\dots,x_{k}x_n)\big).$$
\end{multline*}
After renumbering the variables, we have
$$K[x_1,x_2,\dots,x_{n-k-1},x_n]/\big(I(P_{n-k-1}^{k}),(x_1x_{n},x_2x_n,\dots,x_{k}x_n)\big)\cong S_{n-k}/I(P_{n-k}^k).$$
\end{proof}

\begin{Lemma}\label{le3}
Let $k\geq2$ and $n\geq 3k+2$ and $0\leq i\leq k-1$, then $$S/(I(C_n^k):x_{n-k+i})\cong S_{n-2k-1}/I(P_{n-2k-1}^k)[x_{n-k+i}].$$
\end{Lemma}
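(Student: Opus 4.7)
My plan mirrors the strategy of Lemma \ref{le2} from the path case, but now the ``wrap-around'' edges of $C_n^k$ enlarge the neighborhood of $x_{n-k+i}$, so the colon ideal will absorb more variables, leaving only $n-2k-1$ ``free'' variables rather than $n-2k-1+i$ as in the path case.

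The plan is to establish the equality
\begin{equation*}
(I(C_n^k):x_{n-k+i}) = \bigl(I(P_{n-2k-1}^k),\,D_{n-k+i}\bigr),
\end{equation*}
where the copy of $I(P_{n-2k-1}^k)$ on the right-hand side sits in the variables $\{x_1,\dots,x_{n-2k-1}\}$ when $i=0$ and in the variables $\{x_{i+1},\dots,x_{n-2k+i-1}\}$ when $1\le i\le k-1$. The inclusion $\supseteq$ is the easy direction: every generator of $D_{n-k+i}$ lies in $(I(C_n^k):x_{n-k+i})$ by the very definition of $D_{n-k+i}$ as the neighborhood of $x_{n-k+i}$ in $C_n^k$, and every generator of the restricted $I(P_{n-2k-1}^k)$ is already a generator of $I(P_n^k)\subset I(C_n^k)$.

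For the reverse inclusion, I would take a monomial generator $w$ of $(I(C_n^k):x_{n-k+i})$. Writing $w=v/\gcd(v,x_{n-k+i})$ with $v\in\mathcal{G}(I(C_n^k))$, there are two cases. If $v$ uses $x_{n-k+i}$, then $w$ is a variable from $N_{C_n^k}(x_{n-k+i})$, hence $w\in D_{n-k+i}$. Otherwise $v\in\mathcal{G}(I(C_n^k))$ avoids $x_{n-k+i}$, and I must show that either $w\in D_{n-k+i}$ or $w$ lies in the designated copy of $I(P_{n-2k-1}^k)$. This is where the main bookkeeping sits: I need to check that every edge $\{l,m\}$ of $C_n^k$ not incident to $n-k+i$ and not contained in the complement of $D_{n-k+i}\cup\{x_{n-k+i}\}$ actually meets $\supp(D_{n-k+i})$. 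Concretely, for $1\le i\le k-1$ the wrap-around edges $\{l,m\}$ with $1\le l\le k$ and $n-k\le m\le n$ always have $m\ge n-k+i+1$ or $l\le i$, placing one endpoint in $D_{n-k+i}$; the analogous check for $i=0$ is simpler because then no wrap-around edge lies in the low-indexed region $\{1,\dots,n-2k-1\}$. The remaining edges of $C_n^k$ are edges of $P_n^k$, and the computation in Lemma \ref{le2} handles them.

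Once the ideal equality is in hand, the isomorphism is formal: in $S/(I(C_n^k):x_{n-k+i})$ the variables listed in $D_{n-k+i}$ are zero, the variable $x_{n-k+i}$ is free, and the remaining $n-2k-1$ variables (after relabeling $x_{i+1},\dots,x_{n-2k+i-1}$ as $x_1,\dots,x_{n-2k-1}$) satisfy precisely the relations of $I(P_{n-2k-1}^k)$, giving the claimed isomorphism with $S_{n-2k-1}/I(P_{n-2k-1}^k)[x_{n-k+i}]$. The main obstacle is purely combinatorial: verifying with care the index ranges that guarantee every cycle edge outside the path is either killed by $D_{n-k+i}$ or produces a generator of the small path ideal, and this requires the hypothesis $n\ge 3k+2$ so that $n-2k-1\ge k+1$ and the resulting restriction really is a $k$th power of a path (and not, say, a Veronese situation as in Remark \ref{re1}).
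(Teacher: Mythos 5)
Your proposal is correct and follows essentially the same route as the paper: both identify $(I(C_n^k):x_{n-k+i})$ with $D_{n-k+i}$ plus the restriction of the edge ideal to the window $\{x_{i+1},\dots,x_{n-2k-1+i}\}$, by writing each generator as $v/\gcd(v,x_{n-k+i})$ and sorting according to whether $v$ meets the neighborhood, then renumbering. The only difference is that you spell out the wrap-around-edge bookkeeping (via $m\ge n-k+i+1$ or $l\le i$) that the paper leaves implicit, which is a welcome addition rather than a divergence.
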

\begin{proof}
Let $w$ be a monomial generator of $(I(C_n^k):x_{n-k+i})$, then $w=\frac{v}{gcd(v,x_{n-k+i})}$, where $v\in\mathcal{G}(I(C_n^k))$. If $\supp(v)\cap \mathcal{G}(D_{n-k+i})\neq\emptyset$, then we have $w\in \mathcal{G}(D_{n-k+i})$ and if $\supp(v)\cap \mathcal{G}(D_{n-k+i})=\emptyset$ then $$w\in E:=\mathcal{G}(I(C_{n}^k))\cap K[x_{i+1},x_{i+2},\dots,x_{n-2k-1+i}].$$
So we get $$(I(C_n^k):x_{n-k+i})\subset E+D_{n-k+i},$$ the other inclusion being trivial we get that $(I(C_n^k):x_{n-k+i})=E+D_{n-k+i}.$
Thus we have $S/(I(C_n^k):x_{n-k+i})=S/(E+D_{n-k+i}).$
After renumbering the variables, we have $$S/(I(C_n^k):x_{n-k+i})=S/(E,D_{n-k+i})\cong S_{n-2k-1}/I(P_{n-2k-1}^k)[x_{n-k+i}].$$
\end{proof}
\begin{Lemma}\label{le1111}
Let $k\geq 2$, $n\geq 3k+2$ and $0\leq i\leq k-1$, then we have $$S/\big((I(C_{n}^k),A_{n-k+(i-1)}):x_{n-k+i}\big)\cong S_{n-2k-1}/I(P_{n-2k-1}^k)[x_{n-k+i}].$$
\end{Lemma}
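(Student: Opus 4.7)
The statement is the cycle analogue of Lemma \ref{cor1} (which handled the path case) and it should follow the same template, combined with the cycle colon calculation already established in Lemma \ref{le3}. My plan is to reduce everything to Lemma \ref{le3} by showing $A_{n-k+(i-1)}\subset D_{n-k+i}$, so that adjoining $A_{n-k+(i-1)}$ before taking the colon leaves the quotient unchanged.

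First I would use the elementary identity
\begin{equation*}
\bigl(I(C_n^k),A_{n-k+(i-1)}\bigr):x_{n-k+i}=\bigl(I(C_n^k):x_{n-k+i},\,A_{n-k+(i-1)}:x_{n-k+i}\bigr).
\end{equation*}
Since $A_{n-k+(i-1)}$ is generated by variables $x_{n-k},\dots,x_{n-k+i-1}$, none of which equals $x_{n-k+i}$, one has $A_{n-k+(i-1)}:x_{n-k+i}=A_{n-k+(i-1)}$. By the computation inside the proof of Lemma \ref{le3} we also know $(I(C_n^k):x_{n-k+i})=(E,D_{n-k+i})$, where $E=\mathcal{G}(I(C_n^k))\cap K[x_{i+1},\dots,x_{n-2k-1+i}]$.

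The key step is the containment $A_{n-k+(i-1)}\subset D_{n-k+i}$. For $i=0$ this is vacuous since $A_{n-k-1}=(0)$. For $1\le i\le k-1$, the explicit description of $D_{n-k+i}$ contains the block of variables $x_{n-2k+i},x_{n-2k+i+1},\dots,x_{n-k+i-1}$; the inequality $n-2k+i\le n-k$ holds because $i\le k-1$, so each of $x_{n-k},x_{n-k+1},\dots,x_{n-k+i-1}$ lies in this block. Hence $A_{n-k+(i-1)}\subset D_{n-k+i}$, and the colon ideal collapses to $(E,D_{n-k+i})=(I(C_n^k):x_{n-k+i})$.

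Applying Lemma \ref{le3} then yields the required isomorphism $S/((I(C_n^k),A_{n-k+(i-1)}):x_{n-k+i})\cong S_{n-2k-1}/I(P_{n-2k-1}^k)[x_{n-k+i}]$. The only obstacle worth watching is the indexing check $n-2k+i\le n-k$ that guarantees the containment $A_{n-k+(i-1)}\subset D_{n-k+i}$; beyond that the argument is a direct consequence of Lemma \ref{le3} and is structurally identical to the path case treated in Lemma \ref{cor1}.
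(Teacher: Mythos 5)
Your proposal is correct and follows essentially the same route as the paper: rewrite the colon as $\big((I(C_n^k):x_{n-k+i}),A_{n-k+(i-1)}\big)$, invoke the computation $(I(C_n^k):x_{n-k+i})=(E,D_{n-k+i})$ from Lemma \ref{le3}, absorb $A_{n-k+(i-1)}$ into $D_{n-k+i}$, and conclude by Lemma \ref{le3}. Your explicit index check for the containment $A_{n-k+(i-1)}\subset D_{n-k+i}$ is a detail the paper leaves implicit, but the argument is the same.
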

\begin{proof}
Clearly $\big((I(C_{n}^k),A_{n-k+(i-1)}):x_{n-k+i}\big)=\big((I(C_{n}^k):x_{n-k+i}),A_{n-k+(i-1)}\big)$. By using the same arguments as in the proof of Lemma \ref{le3} we have
$$\big((I(C_{n}^k):x_{n-k+i}),A_{n-k+(i-1)}\big)=\big(E,D_{n-k+i},A_{n-k+(i-1)}\big)=
\big(E,D_{n-k+i}\big)$$
as $A_{n-k+(i-1)}\subset D_{n-k+i}$. Thus the required result follows by Lemma \ref{le3}.
\end{proof}

\begin{Theorem}\label{Th22}
Let $n\geq 3$, then
\begin{eqnarray*}
  \depth(S/I(C_n^k)) &=& 1, \text{\,\,\,\,\,\,\,if\,\, $n\leq 2k+1$;}\\
  \depth(S/I(C_n^k)) &\geq& \lceil\frac{n-k}{2k+1}\rceil, \text{\,\,\,\,\,\,\,if\,\, $n\geq 2k+2$.}
\end{eqnarray*}

\end{Theorem}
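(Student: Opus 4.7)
The plan is to treat the two cases separately and, in the main case, to mimic the argument of Theorem~\ref{Th1}~(b)(4) with $C_n^k$ in place of $P_n^k$.

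When $n\le 2k+1$, the ideal $I(C_n^k)$ coincides with the squarefree Veronese ideal of degree two in $n$ variables, so Remark~\ref{rev} yields $\depth(S/I(C_n^k))=1$ immediately. Before attacking $n\ge 2k+2$ I dispose of the small ranges in which the asserted lower bound is just $1$: when $2k+2\le n\le 3k+1$ and $k\ge 2$, we have $\lceil(n-k)/(2k+1)\rceil=1$, and the needed inequality $\depth\ge 1$ is equivalent to $\mathfrak{m}\notin\Ass(S/I(C_n^k))$, which holds since every vertex of $C_n^k$ has non-neighbours when $n\ge 2k+2$. The residual small range $k=1$, $n\ge 4$ reduces to the classical lower bound for the depth of the edge ideal of an $n$-cycle, which can be cited from \cite{SM1}.

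For the main range $k\ge 2$, $n\ge 3k+2$ I set up the cascade of $k$ short exact sequences obtained from successive colon/quotient operations with the variables $x_{n-k},x_{n-k+1},\ldots,x_{n-1}$, exactly as in case~(b)(4) of Theorem~\ref{Th1}. Lemma~\ref{le111} identifies the terminal quotient $S/(I(C_n^k),A_{n-1})$ with $S_{n-k}/I(P_{n-k}^k)$, whose depth by Theorem~\ref{Th1} equals $\lceil(n-k)/(2k+1)\rceil$. Lemma~\ref{le3} (for $i=0$) and Lemma~\ref{le1111} (for $1\le i\le k-1$) then identify each colon module
\[
S/\big((I(C_n^k),A_{n-k+(i-1)}):x_{n-k+i}\big)\;\cong\;S_{n-2k-1}/I(P_{n-2k-1}^k)[x_{n-k+i}],
\]
whose depth is $\lceil(n-2k-1)/(2k+1)\rceil+1$ by Theorem~\ref{Th1} together with \cite[Lemma~3.6]{HVZ}.

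The only numerical check required is the elementary ceiling inequality
\[
\lceil (n-2k-1)/(2k+1)\rceil +1\;\ge\;\lceil (n-k)/(2k+1)\rceil,
\]
which guarantees that every colon term has depth at least the target $\lceil(n-k)/(2k+1)\rceil$. With this in hand, part~(1) of Lemma~\ref{le01} applied iteratively from the bottom of the cascade upwards propagates the lower bound through each intermediate quotient $S/(I(C_n^k),A_{n-k+i})$ and delivers the desired $\depth(S/I(C_n^k))\ge\lceil(n-k)/(2k+1)\rceil$. The main subtlety, and the reason the conclusion is only a lower bound here, in contrast to the equality of Theorem~\ref{Th1}, is that the terminal module collapses to a path on only $n-k$ vertices instead of $n$; hence the chain of inequalities cannot be tightened to equalities via Lemma~\ref{led}, and a matching upper bound must be sought separately.
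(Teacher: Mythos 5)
Your proposal is correct and follows essentially the same route as the paper: the same cascade of short exact sequences built from the ideals $A_{n-k},\dots,A_{n-1}$, the same identifications of the end terms via Lemmas \ref{le111}, \ref{le3} and \ref{le1111}, and the same ceiling comparison $\lceil\frac{n-2k-1}{2k+1}\rceil+1=\lceil\frac{n}{2k+1}\rceil\geq\lceil\frac{n-k}{2k+1}\rceil$ before applying the Depth Lemma from the bottom up. The only slip is the reference in the case $k=1$: the bound $\depth(S/I(C_n^1))=\lceil\frac{n-1}{3}\rceil$ is taken from \cite[Proposition 1.3]{MC4}, not from \cite{SM1}, which concerns trees rather than cycles.
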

\begin{proof}
\begin{description}
\item[(a)] Let $n\leq 2k+1$, then by \cite[Corollary 10.3.7]{HH}, $\depth(S/I(C_n^k))=1$.
\item[(b)] Let $n\geq 2k+2$, we consider the following cases:
\begin{description}
\item[(1)] If $k=1$, then by \cite[Proposition 1.3]{MC4} $\depth(S/I(C_n^1))=\lceil\frac{n-1}{3}\rceil$.
\item[(2)] If $k\geq 2$ and $2k+2\leq n\leq 3k+1$, then we have $\depth(S/I(C_n^k))\geq 1=\lceil\frac{n-k}{2k+1}\rceil$ as $\mathfrak{m}\notin \Ass(S/I(C_n^k))$.
\end{description}
\item[(3)] If $k\geq 2$ and $n\geq 3k+2$, consider the family of short exact sequences:
  $$0\longrightarrow S/(I(C_n^k):x_{n-k})=S/((I(C_n^k),A_{n-k-1}):x_{n-k})\xrightarrow[]{\cdot x_{n-k}} S/I(C_n^k)\longrightarrow S/(I(C_n^k),A_{n-k})\longrightarrow 0$$
  $$0\longrightarrow S/((I(C_n^k),A_{n-k}):x_{n-k+1})\xrightarrow[]{\cdot x_{n-k+1}} S/(I(C_n^k),A_{n-k})\longrightarrow S/(I(C_n^k),A_{n-k+1})\longrightarrow 0$$
  $$\vdots$$
   $$0\longrightarrow S/((I(C_n^k),A_{n-k+(i-1)}):x_{n-k+i})\xrightarrow[]{\cdot x_{n-k+i}} S/(I(C_n^k),A_{n-k+(i-1)})\longrightarrow S/(I(C_n^k),A_{n-k+i})\longrightarrow 0$$
   $$\vdots$$
   $$0\longrightarrow S/((I(C_n^k),A_{n-2}):x_{n-1})\xrightarrow[]{\cdot x_{n-1}} S/(I(C_n^k),A_{n-2})\longrightarrow S/(I(C_n^k),A_{n-1})\longrightarrow 0$$
By Lemma \ref{le111} we have
$$S/(I(C_n^k),A_{n-1}))\cong S_{n-k}/I(P_{n-k}^k).$$
Let $0\leq i\leq k-1$, then by Lemma \ref{le1111}, we have
$$S/((I(C_n^k),A_{n-k+(i-1)}):x_{n-k+i})\cong S_{n-2k-1}/I(P_{n-2k-1}^k)[x_{n-k+i}].$$
By Theorem \ref{Th1} and \cite[Lemma 3.6]{HVZ}, we have $$\depth(S/((I(C_n^k),A_{n-k+(i-1)}):x_{n-k+i}))=\lceil\frac{n-2k-1}{2k+1}\rceil+1=\lceil\frac{n}{2k+1}\rceil.$$
 Again by Theorem \ref{Th1}, we have
 $$\depth (S/(I(C_n^k),A_{n-1}))=\lceil\frac{n-k}{2k+1}\rceil.$$
Thus by Depth Lemma $\depth(S/I(C_n^k))\geq \lceil\frac{n-k}{2k+1}\rceil.$
\end{description}
\end{proof}
\begin{Corollary}\label{Cor11}
Let $n\geq 3$, if $n\leq 2k+1$, then $\depth(S/I(C_n^k))=1.$ If $n\geq 2k+2$, then
\begin{eqnarray*}
&\depth(S/I(C_{n}^k))=\lceil\frac{n}{2k+1}\rceil,& \text{\,\,\,if\,\, $n\equiv 0,k+1,\dots,n-1\,(\mod (2k+1))$};\\
&\lceil\frac{n}{2k+1}\rceil-1\leq\depth(S/I(C_{n}^k))\leq \lceil\frac{n}{2k+1}\rceil,&  \text{\,\,\,if\,\, $n\equiv 1,\dots,k\,(\mod (2k+1))$}.
\end{eqnarray*}
\end{Corollary}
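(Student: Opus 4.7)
The plan is to combine the lower bound already provided by Theorem \ref{Th22} with a matching upper bound produced by the colon-ideal criterion \cite[Corollary 1.3]{AR1}. The boundary case $n\leq 2k+1$ is immediate from Theorem \ref{Th22}. For $n\geq 2k+2$, write $n=(2k+1)q+r$ with $0\leq r\leq 2k$. A routine ceiling computation shows
$$\lceil\frac{n-k}{2k+1}\rceil=\begin{cases}\lceil\frac{n}{2k+1}\rceil, & \text{if } r\in\{0,k+1,\ldots,2k\},\\ \lceil\frac{n}{2k+1}\rceil-1, & \text{if } r\in\{1,\ldots,k\},\end{cases}$$
so substituting into Theorem \ref{Th22} supplies the lower bound in each regime.

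For the upper bound $\depth(S/I(C_n^k))\leq \lceil\frac{n}{2k+1}\rceil$, I would exhibit an independent dominating set $V'\subset [n]$ of $C_n^k$ with $|V'|=\lceil\frac{n}{2k+1}\rceil$ and apply \cite[Corollary 1.3]{AR1} to the squarefree monomial $v=\prod_{i\in V'}x_i$. For $r=0$, take $V'=\{1,\,2k+2,\,\ldots,\,(q-1)(2k+1)+1\}$, so every cyclic gap equals $2k+1$. For $r\in\{k+1,\ldots,2k\}$, adjoin the vertex $q(2k+1)+1$: the closing cyclic gap is $r\geq k+1$, and the remaining gaps are $2k+1$. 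For $r\in\{1,\ldots,k\}$, instead adjoin $(q-1)(2k+1)+k+2$, producing the cyclic gap sequence $2k+1,\ldots,2k+1,k+1,k+r$, all of which lie in the admissible window $[k+1,2k+1]$. Independence of $V'$ forces $v\notin I(C_n^k)$, and since every vertex outside $V'$ is at cyclic distance at most $k$ from some member of $V'$, one obtains $(I(C_n^k):v)=(x_j:j\notin V')$. Consequently $S/(I(C_n^k):v)\cong K[x_i:i\in V']$ has depth $|V'|=\lceil\frac{n}{2k+1}\rceil$, and \cite[Corollary 1.3]{AR1} yields the desired upper bound. Together with the lower bound above, this proves the corollary.

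The only delicate step is the verification, in the $r\in\{1,\ldots,k\}$ regime, that $V'$ is genuinely an independent set in $C_n^k$: one must check not merely the two consecutive cyclic gaps $k+1$ and $k+r$ but that every pair of chosen vertices has cyclic distance at least $k+1$. This follows from the elementary observation that when all consecutive cyclic gaps lie in $[k+1,2k+1]$, each of the two arcs between any two distinct chosen vertices is a sum of such gaps and hence has length at least $k+1$. The degenerate subcase $q=1$, in which $V'$ contains only two vertices, is handled by direct inspection.
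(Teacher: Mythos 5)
Your argument is correct, and the lower-bound half coincides with the paper's (Theorem \ref{Th22} together with the routine ceiling computation relating $\lceil\frac{n-k}{2k+1}\rceil$ to $\lceil\frac{n}{2k+1}\rceil$). The upper bound, however, is obtained by a genuinely different route. The paper colons out the single variable $x_{n-k}$, uses Lemma \ref{le3} to identify $S/(I(C_n^k):x_{n-k})$ with $S_{n-2k-1}/I(P_{n-2k-1}^k)[x_{n-k}]$ (respectively with $S/(I(P_n^k):x_{n-k})$ when $2k+2\leq n\leq 3k+1$), and then quotes the already-established depth formula for path powers (Theorem \ref{Th1}) to conclude $\depth(S/(I(C_n^k):x_{n-k}))=\lceil\frac{n}{2k+1}\rceil$. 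You instead colon out the product $v=\prod_{i\in V'}x_i$ over an explicit independent dominating set $V'$ of $C_n^k$ of size $\lceil\frac{n}{2k+1}\rceil$, so that $(I(C_n^k):v)$ collapses to the monomial prime $(x_j:j\notin V')$ and the colon quotient is a polynomial ring of the right dimension. Your gap construction is sound: in every case the consecutive cyclic gaps lie in $[k+1,2k+1]$, which simultaneously forces independence (since $|V'|=\lceil\frac{n}{2k+1}\rceil\geq 2$ for $n\geq 2k+2$, every arc between two chosen vertices is a nonempty sum of such gaps) and domination, and the identification $(I(C_n^k):v)=(x_j:j\notin V')$ follows exactly as you say from independence plus domination. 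The one point worth making explicit is that \cite[Corollary 1.3]{AR1} is being applied to a monomial of degree greater than one; this is legitimate, since Rauf's statement holds for an arbitrary monomial $v\notin I$ (and in any case one may iterate the single-variable version along the partial products of $v$, each of which lies outside $I$ because $v$ does). What your approach buys is a self-contained, purely combinatorial upper bound, valid uniformly for all $k\geq 1$, that does not depend on Lemma \ref{le3} or on the depth computation for $S/I(P_m^k)$; the paper's approach is shorter in context because it reuses the machinery already assembled for Theorem \ref{Th22}.
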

\begin{proof}
By Theorem \ref{Th22}, it is enough to prove that $\depth(S/I(C_n^k))\leq \lceil\frac{n}{2k+1}\rceil$, for $k\geq 2$ and $n\geq 2k+2$. Since $x_{n-k}\notin I(C_n^k)$, thus by \cite[Corollary 1.3]{AR1} we have $$\depth(S/I(C_n^k))\leq\depth(S/(I(C_n^k):x_{n-k})).$$ Now we consider two cases:
\begin{description}
\item [(1)] Let $2k+2\leq n\leq 3k+1$, then $S/(I(C_n^k):x_{n-k})=S/(I(P_n^k):x_{n-k})$ so by the proof of Theorem \ref{Th1} we have $\depth(S/(I(P_n^k):x_{n-k}))=2=\lceil\frac{n}{2k+1}\rceil$. Therefore $$\depth(S/I(C_n^k))\leq\depth(S/(I(C_n^k):x_{n-k}))=2=\lceil\frac{n}{2k+1}\rceil.$$
\item [(2)] Let $n\geq 3k+2$, then by Lemma \ref{le3}, $S/(I(C_n^k):x_{n-k})\cong S_{n-2k-1}/I(P_{n-2k-1}^k)[x_{n-k}]$. By \cite[Lemma 3.6]{HVZ} and Theorem \ref{Th1}, $\depth (S_{n-2k-1}/I(P_{n-2k-1}^k)[x_{n-k}])=\lceil\frac{n}{2k+1}\rceil$. Thus
$$\depth(S/I(C_n^k))\leq \depth(S/(I(C_n^k):x_{n-k}))=\lceil\frac{n}{2k+1}\rceil.$$
\end{description}

\end{proof}

\begin{Theorem}\label{Th33}
Let $n\geq 3$, then
\begin{eqnarray*}
  \sdepth(S/I(C_n^k))&=&1, \text{\,\,\,\,\,\,\,if\,\, $n\leq 2k+1$;}\\
  \sdepth(S/I(C_n^k))&\geq&\lceil\frac{n-k}{2k+1}\rceil, \text{\,\,\,\,\,\,\,if\,\, $n\geq 2k+2$.}
\end{eqnarray*}
\end{Theorem}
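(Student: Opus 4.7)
The plan is to mimic the proof of Theorem~\ref{Th22} step by step, replacing each invocation of the Depth Lemma by Lemma~\ref{le02}, and replacing the uses of Theorem~\ref{Th1} (which gave depth values for path powers) by Theorem~\ref{Th11} (which now supplies the Stanley depth of the same modules).

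First I would dispose of the small cases. If $n \le 2k+1$, then $I(C_n^k)$ is the squarefree Veronese ideal of degree $2$ in $x_1,\dots,x_n$, so \cite[Theorem 1.1]{MC8} gives $\sdepth(S/I(C_n^k))=1$ directly. If $n\ge 2k+2$, $k\ge 2$ and $n\le 3k+1$, then $\lceil(n-k)/(2k+1)\rceil=1$; since $\mathfrak{m}\notin\Ass(S/I(C_n^k))$ we have $\depth(S/I(C_n^k))\ge 1$, hence by \cite[Theorem 1.4]{MC5} also $\sdepth(S/I(C_n^k))\ge 1$. The case $k=1$, $n\ge 4$ reduces to the known Stanley depth bound $\sdepth(S/I(C_n))\ge\lceil(n-1)/3\rceil$ for the cycle.

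The main case is $k\ge 2$ and $n\ge 3k+2$. Here I would write down exactly the same telescoping family of short exact sequences as in the proof of Theorem~\ref{Th22}, obtained by peeling off $x_{n-k},x_{n-k+1},\dots,x_{n-1}$ one at a time via the colon/quotient exact sequence. By Lemma~\ref{le111} the final quotient $S/(I(C_n^k),A_{n-1})$ is isomorphic to $S_{n-k}/I(P_{n-k}^k)$, whose Stanley depth is $\lceil(n-k)/(2k+1)\rceil$ by Theorem~\ref{Th11}. By Lemma~\ref{le1111}, for each $0\le i\le k-1$ the colon quotient $S/((I(C_n^k),A_{n-k+(i-1)}):x_{n-k+i})$ is isomorphic to $S_{n-2k-1}/I(P_{n-2k-1}^k)[x_{n-k+i}]$, and Theorem~\ref{Th11} together with \cite[Lemma 3.6]{HVZ} gives its Stanley depth as $\lceil(n-2k-1)/(2k+1)\rceil+1=\lceil n/(2k+1)\rceil$. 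Applying Lemma~\ref{le02} to each sequence in turn, starting from the bottom and working upward, each middle term has Stanley depth at least the minimum of the two outer Stanley depths; since $\lceil n/(2k+1)\rceil\ge\lceil(n-k)/(2k+1)\rceil$, the overall lower bound propagates to $\sdepth(S/I(C_n^k))\ge\lceil(n-k)/(2k+1)\rceil$.

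The routine part is the bookkeeping of the telescoping sequences, which is already set up verbatim in the proof of Theorem~\ref{Th22}. The only mild obstacle is the $k=1$ case, to which Lemmas~\ref{le111} and~\ref{le1111} do not apply; but there the needed inequality is either the standard Stanley depth formula for a cycle or, failing a ready reference, it can be recovered by the same two-sequence argument splitting on a single vertex and invoking Lemma~\ref{PrPFY} for the resulting path quotients.
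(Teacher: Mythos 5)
Your proposal is correct and follows essentially the same route as the paper: the small cases are handled identically (squarefree Veronese for $n\le 2k+1$, the $\mathfrak{m}\notin\Ass$ argument combined with \cite[Theorem 1.4]{MC5} for $2k+2\le n\le 3k+1$, and \cite[Proposition 1.8]{MC4} for $k=1$), while for $n\ge 3k+2$ the paper likewise applies Lemma~\ref{le02} to the exact sequences of Theorem~\ref{Th22}, with Lemmas~\ref{le111} and~\ref{le1111} plus Theorem~\ref{Th11} supplying the Stanley depths of the outer terms. No substantive differences.
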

\begin{proof}
\begin{description}
\item[(a)] Let $n\leq 2k+1$, then $\sdepth(S/I(C_n^k))=1$ by \cite[Theorem 1.1]{MC8}.
\item[(b)] Let $n\geq 2k+2$, consider the following cases:
\begin{description}
\item[(1)] If $k=1$, then by \cite[Proposition 1.8]{MC4} $\sdepth(S/I(C_n^1))\geq\lceil\frac{n-1}{3}\rceil$.
\item[(2)] If $k\geq 2$, $2k+2\leq n\leq 3k+1$, then $\depth(S/I(C_n^k))\geq 1$ as $\mathfrak{m}\notin \Ass(S/I(C_n^k))$, thus by \cite[Theorem 1.4]{MC5}, $\sdepth(S/I(C_n^k))\geq 1=\lceil\frac{n-k}{2k+1}\rceil$.
\item[(3)] If $k\geq 2$, $n\geq 3k+2$, this theorem can be proved by using similar arguments of Theorem \ref{Th22}. We apply Lemma \ref{le02} on the exact sequences of Theorem \ref{Th22} we get
\begin{multline*}
\sdepth(S/I(C_n^k))\geq\min\big\{\sdepth(S/(I(C_n^k):x_{n-k})),\sdepth(S/(I(C_n^k),A_{n-1})),\\
\min_{i=1}^{k-1}\{\sdepth(S/((I(C_n^k),A_{n-k+(i-1)}):x_{n-k+i}))\}\big\}\geq\lceil\frac{n-k}{2k+1}\rceil.
\end{multline*}
\end{description}
\end{description}
\end{proof}
\begin{Corollary}\label{Cor22}
Let $n\geq 3$, if $n\leq 2k+1$, then $\sdepth(S/I(C_n^k))=1.$ If $n\geq 2k+2$, then
\begin{eqnarray*}
&\sdepth(S/I(C_{n}^k))=\lceil\frac{n}{2k+1}\rceil,& \text{\,\,\,if\,\, $n\equiv 0,k+1,\dots,n-1\,(\mod (2k+1))$};\\
&\lceil\frac{n}{2k+1}\rceil-1\leq\sdepth(S/I(C_{n}^k))\leq \lceil\frac{n}{2k+1}\rceil,&  \text{\,\,\,if\,\, $n\equiv 1,\dots,k\,(\mod (2k+1))$}.
\end{eqnarray*}

\end{Corollary}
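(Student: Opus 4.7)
The plan is to combine the lower bound already proved in Theorem \ref{Th33} with a matching upper bound obtained from Lemma \ref{zia}. The case $n\le 2k+1$ is immediate from Theorem \ref{Th33}, so I concentrate on $n\ge 2k+2$. Writing $n=(2k+1)l+r$ with $0\le r\le 2k$, an elementary ceiling computation gives $\lceil(n-k)/(2k+1)\rceil=\lceil n/(2k+1)\rceil$ when $r\in\{0,k+1,\dots,2k\}$, while $\lceil(n-k)/(2k+1)\rceil=\lceil n/(2k+1)\rceil-1$ when $r\in\{1,\dots,k\}$. Fed into Theorem \ref{Th33}, this supplies the lower halves of both asserted inequalities.

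For the matching upper bound $\sdepth(S/I(C_n^k))\le\lceil n/(2k+1)\rceil$ I plan to apply Lemma \ref{zia}, exhibiting a squarefree monomial $v\notin I(C_n^k)$ with $|\supp(v)|=\lceil n/(2k+1)\rceil$ such that $x_jv\in I(C_n^k)$ for every $j\in[n]\setminus\supp(v)$; combinatorially this asks for an independent dominating set of $C_n^k$ of the prescribed size. Mimicking the construction used in Theorem \ref{Th11}, I would take
\[
v=\begin{cases}x_{k+1}x_{3k+2}\cdots x_{(2k+1)l-k}, & \text{if } r=0,\\ x_{k+1}x_{3k+2}\cdots x_{(2k+1)l-k}\,x_{(2k+1)l+r}, & \text{if } 1\le r\le k,\\ x_{k+1}x_{3k+2}\cdots x_{(2k+1)l-k}\,x_{(2k+1)l+k+1}, & \text{if } k+1\le r\le 2k.\end{cases}
\]
In all three cases $|\supp(v)|=\lceil n/(2k+1)\rceil$.

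The verification then reduces to showing that every gap between consecutive elements of $\supp(v)$ around the cycle lies in the closed range $[k+1,2k+1]$: a gap $\ge k+1$ keeps the corresponding endpoints non-adjacent in $C_n^k$ (so $v\notin I(C_n^k)$), whereas a gap $\le 2k+1$ places each interior vertex within cycle-distance $k$ of at least one of the two endpoints (so $x_jv\in I(C_n^k)$). The interior gaps are uniformly $2k+1$ by construction, and the wrap-around gap works out to $2k+1$, $k+1$, and $r$ in the three cases respectively, all within the allowed range. The one place where a bit of care is needed is the wrap-around arithmetic when $1\le r\le k$: there one is forced to use an independent set of size $l+1$ even though the lower bound from Theorem \ref{Th33} is only $l$, and precisely this mismatch is what prevents pinning $\sdepth(S/I(C_n^k))$ down to a single value in that residue range. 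Once the gap check is in place, Lemma \ref{zia} delivers the upper bound and completes the proof.
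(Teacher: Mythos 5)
Your argument is correct, and for the upper bound it takes a genuinely different route from the paper's. The lower-bound half is identical in both: the congruence-class comparison of $\lceil\frac{n-k}{2k+1}\rceil$ with $\lceil\frac{n}{2k+1}\rceil$ fed into Theorem \ref{Th33}. For the upper bound, however, the paper does not use Lemma \ref{zia} at all; it reuses the proof of Corollary \ref{Cor11}, namely the inequality $\sdepth(S/I)\leq\sdepth(S/(I:x_{n-k}))$ from \cite[Proposition 2.7]{MC} together with the isomorphism $S/(I(C_n^k):x_{n-k})\cong S_{n-2k-1}/I(P_{n-2k-1}^k)[x_{n-k}]$ of Lemma \ref{le3} and Theorem \ref{Th11}, quoting \cite[Theorem 1.9]{MC4} separately for $k=1$. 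You instead exhibit an explicit independent dominating set of $C_n^k$ of size $\lceil\frac{n}{2k+1}\rceil$ and apply Lemma \ref{zia} directly, extending the witness-monomial construction of Theorem \ref{Th11} from the path to the cycle; your gap analysis (all consecutive gaps in $[k+1,2k+1]$, so any two support vertices are non-adjacent while every other vertex is dominated) is sound, and your observation that the residues $1,\dots,k$ force a set of size $l+1$ against a lower bound of $l$ correctly explains why the value is not pinned down there. What your route buys is self-containedness: it needs neither Lemma \ref{le3} nor the external reference for $k=1$, since the same witness works uniformly; what the paper's route buys is economy, recycling machinery already built for Corollary \ref{Cor11}. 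One small inaccuracy in your write-up: in the case $1\leq r\leq k$ the gap from $x_{(2k+1)l-k}$ to the appended vertex $x_{n}$ equals $k+r$, not $2k+1$, so the interior gaps are not ``uniformly $2k+1$''; since $k+1\leq k+r\leq 2k$ this gap still lies in the admissible range and the conclusion is unaffected.
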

\begin{proof}
When $k=1$, then by \cite[Theorem 1.9]{MC4}, $\sdepth(S/I(C_{n}^k))\leq \lceil\frac{n}{3}\rceil$. By Theorem \ref{Th33} it is enough to prove that $\sdepth(S/I(C_{n}^k))\leq \lceil\frac{n}{2k+1}\rceil$, for $k\geq 2$ and $n\geq 2k+2$. The proof of Corollary \ref{Cor11} also works for Stanley depth by using \cite[Proposition 2.7]{MC} instead of \cite[Corollary 1.3]{AR1} and Theorem \ref{Th11} instead of Theorem \ref{Th1}.
\end{proof}
\section{Lower bounds for Stanley depth of edge ideals of $k^{th}$ powers of paths and cycles and a conjecture of Herzog }
In this section we compute some lower bounds for Stanley depth of $I(P_n^k)$ and $I(C_n^k)$. These bounds are good enough to prove that Conjecture \ref{C1} is true for $I(P_n^k)$ and $I(C_n^k)$. Let $0\leq i\leq k-1$, define $$R_{n-k+i}:=K[\{x_1,x_2,\ldots,x_n\}\backslash \{x_{n-k},x_{n-k+1},\ldots,x_{n-k+i}\}]$$ and $$B'_{n-k+i}:=(N_{P_n^k}(x_{n-k+i})\backslash \{x_{n-k},x_{n-k+1},\ldots,x_{n-k+(i-1)}\}),$$
where $R_{n-k+i}$ is a subring of $S$ and $B'_{n-k+i}$ is a monomial prime ideal of $S$ generated by $$N_{P_n^k}(x_{n-k+i})\backslash \{x_{n-k},x_{n-k+1},\ldots,x_{n-k+(i-1)}\}.$$
Let $I\subset Z=K[x_{i_1},x_{i_1},\dots,x_{i_r}]$ be a monomial ideal and let $Z':=Z[x_{i_r+1}]$, then we write $IZ'=I[x_{i_r+1}]$. We recall in the following a useful remark of Cimpoeas.
\begin{Remark}\cite[Remark 1.7]{MC}\label{reI1}
{\em
Let $I$ be a monomial ideal of $S$, and $I'=(I,x_{n+1},x_{n+2},\dots,x_{n+m})$ be a monomial ideal of $S'=S[x_{n+1},x_{n+2},\dots,x_{n+m}]$, then
$$\sdepth_{S'}(I')\geq \min\{\sdepth_S(I)+m,\sdepth_S(S/I)+\lceil\frac{m}{2}\rceil\}.$$
}
\end{Remark}
\begin{Theorem}\label{th3}
Let $n\geq 2$, then $\sdepth (I(P_n^k))\geq \lceil\frac{n}{2k+1}\rceil+1$.
\end{Theorem}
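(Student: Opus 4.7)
The plan is to prove Theorem \ref{th3} by induction on $n$, mirroring the structure of Proposition \ref{Pr11} but working with the ideal in place of its quotient. For the base cases $n \leq 2k+1$, the target bound reduces to $\sdepth(I(P_n^k)) \geq 2$. When $n \leq k+1$, $I(P_n^k)$ is the squarefree Veronese ideal of degree two in $n$ variables and the bound follows from the known formula. For $k+2 \leq n \leq 2k+1$ I would write down an explicit Stanley decomposition of $I(P_n^k)$ with all spaces of dimension at least two, for instance by ordering the minimal generators and assigning each monomial in $I(P_n^k)$ to the first generator that divides it.

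For the inductive step $n \geq 2k+2$, consider the short exact sequence of $\ZZ^n$-graded $S$-modules
\[
0 \longrightarrow (I(P_n^k):x_n) \xrightarrow{\,\cdot x_n\,} I(P_n^k) \longrightarrow I(P_n^k)\big/x_n(I(P_n^k):x_n) \longrightarrow 0
\]
and apply Lemma \ref{le02}. The cokernel is an $S$-module on which $x_n$ acts as zero, so its Stanley depth over $S$ coincides with its Stanley depth over $S_{n-1}$, and as an $S_{n-1}$-module it is isomorphic to $I(P_{n-1}^k)$; by induction this is at least $\lceil (n-1)/(2k+1)\rceil + 1$. For the kernel, the calculation underlying Lemma \ref{le1} gives $(I(P_n^k):x_n) = (I(P_{n-k-1}^k), x_{n-k}, \ldots, x_{n-1})$ in which $x_n$ is a free variable; \cite[Lemma 3.6]{HVZ} contributes a $+1$ from this free variable, and Remark \ref{reI1} applied with $m=k$ to the remaining ideal in $S_{n-1}$, together with the inductive hypothesis on $\sdepth(I(P_{n-k-1}^k))$ and Theorem \ref{Th11} for $\sdepth(S_{n-k-1}/I(P_{n-k-1}^k))$, yields
\[
\sdepth_S\!\big((I(P_n^k):x_n)\big) \;\geq\; \Big\lceil\tfrac{n-k-1}{2k+1}\Big\rceil + \Big\lceil\tfrac{k}{2}\Big\rceil + 1.
\]
Taking the minimum of these two bounds and analyzing by the residue of $n$ modulo $2k+1$ recovers the desired inequality whenever $n \not\equiv 1 \pmod{2k+1}$.

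The main obstacle is the residue class $n \equiv 1 \pmod{2k+1}$, where the cokernel bound $\lceil(n-1)/(2k+1)\rceil + 1$ falls exactly one short of the target; the kernel bound remains strong, but the minimum is driven by the cokernel. Resolving this is the genuine technical crux. One approach I would pursue: enlarge the base by verifying $\sdepth(I(P_{2k+2}^k)) \geq 3$ directly through an explicit Stanley decomposition (e.g., in the $k=2$ case a ten-piece decomposition with minimum space dimension three is readily constructed by ordering the nine edge generators to put the ``central'' edge $x_{k+1}x_{k+2}$ first), and then lift the bound within the class $n \equiv 1$ in jumps of $2k+1$. The lifting step can be obtained by replacing the splitting variable by an interior vertex such as $x_{n-k}$: by Lemma \ref{le2} the colon ideal becomes $(I(P_{n-2k-1}^k), B_{n-k})$ whose linear generator set $B_{n-k}$ now contains $2k$ variables, so Remark \ref{reI1} applied with $m=2k$ gives the strictly stronger bound $\lceil(n-2k-1)/(2k+1)\rceil + k + 1$ on the kernel side, adequate for the target. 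The delicate remaining ingredient is a Stanley decomposition for the complementary piece $I(P_n^k)_{\neg x_{n-k}}$, which corresponds to the edge ideal of the non-standard graph $P_n^k - \{n-k\}$ and must be constructed by hand.
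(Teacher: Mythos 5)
Your proposal stalls at exactly the point you flag, and that point is the whole content of the theorem. Splitting at $x_n$ cannot close the induction: when $n\equiv 1\pmod{2k+1}$ the cokernel piece $I(P_{n-1}^k)$ only yields $\lceil\tfrac{n-1}{2k+1}\rceil+1=\lceil\tfrac{n}{2k+1}\rceil$, one short of the target, and since the bound is a minimum over the pieces the induction breaks for an entire residue class. Your fallback — colon at $x_{n-k}$, where $B_{n-k}$ has $2k$ linear generators and Remark \ref{reI1} gives a strong kernel bound — is the right move for the kernel, but you leave the complementary piece (the span of monomials of $I(P_n^k)$ not divisible by $x_{n-k}$) with only the statement that a Stanley decomposition ``must be constructed by hand.'' That piece is the hard case; without it the theorem is unproven for $n\equiv 1\pmod{2k+1}$, and in fact for every $n$, since the complementary piece is not the edge ideal of any power of a path and does not fall to the inductive hypothesis as stated.

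The missing idea is to keep peeling off \emph{all} $k$ neighbors $x_{n-k},x_{n-k+1},\dots,x_{n-1}$ of $x_n$, one at a time. Writing $R_{n-k+i}$ for the polynomial ring with $x_{n-k},\dots,x_{n-k+i}$ deleted, the paper uses the $K$-vector space decomposition
$$I(P_n^k)=\Big(I(P_n^k)\cap R_{n-1}\Big)\oplus\bigoplus_{i=1}^{k-1}x_{n-k+i}\big(I(P_n^k)\cap R_{n-k+(i-1)}:x_{n-k+i}\big)R_{n-k+i}\oplus x_{n-k}\big(I(P_n^k):x_{n-k}\big)S.$$
The point is that the final complementary piece is $I(P_{n-k-1}^k)[x_n]$: once every neighbor of $x_n$ has been removed, $x_n$ becomes a free variable, and \cite[Lemma 3.6]{HVZ} supplies the extra $+1$ that your cokernel $I(P_{n-1}^k)$ lacks; since $\lceil\tfrac{n-k-1}{2k+1}\rceil+2\geq\lceil\tfrac{n}{2k+1}\rceil+1$ always, the residue-class case analysis disappears. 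Each intermediate piece equals $(\mathcal{G}(I(P_{n-2k-1+i}^k)),B'_{n-k+i})[x_{n-k+i}]$ and is bounded by Remark \ref{reI1} together with the inductive hypothesis and Theorem \ref{Th11}, exactly as you do for the kernel at $x_{n-k}$ (with a separate, easy treatment of the range $2k+2\leq n\leq 3k+1$ where the colon ideals degenerate). So your skeleton for the kernel is sound, but the decisive device — iterating the splitting until $x_n$ is free — is absent, and with it the proof of the critical case.
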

\begin{proof}
\begin{description}
\item [(a)] Let $n\leq 2k+1$, since the minimal generators of $I(P_n^k)$ have degree 2, so by \cite[Lemma 2.1]{KS} $\sdepth (I(P_n^k))\geq2=\lceil\frac{n}{2k+1}\rceil+1$ .\\
\item [(b)] Let $n\geq 2k+2$, if $k=1$, then by \cite[Theorem 2.3]{O} we have $\sdepth(I(P_n^1))\geq n-\lfloor\frac{n-1}{2}\rfloor=\lceil\frac{n-1}{2}\rceil+1\geq \lceil\frac{n}{3}\rceil+1$. Now let $k\geq 2$, we prove this result by induction on $n$. We consider the following decomposition of $I(P_n^k)$ as a vector space: \\
$$I(P_n^k)=I(P_n^k)\cap R_{n-k}\bigoplus x_{n-k}(I(P_n^k):x_{n-k})S.$$ Similarly, we can decompose $I(P_n^k)\cap R_{n-k}$ by the following:
$$I(P_n^k)\cap R_{n-k}=I(P_n^k)\cap R_{n-k+1}\bigoplus x_{n-k+1}(I(P_n^k)\cap R_{n-k}:x_{n-k+1})R_{n-k}.$$
Continuing in the same way for $1\leq i\leq k-1$ we have
$$I(P_n^k)\cap R_{n-k+i}=I(P_n^k)\cap R_{n-k+(i+1)}\bigoplus x_{n-k+(i+1)}(I(P_n^k)\cap R_{n-k+i}:x_{n-k+(i+1)})R_{n-k+i}.$$
Finally we get the following decomposition of $I(P_n^k)$:
\begin{multline*}
I(P_n^k)=I(P_n^k)\cap R_{n-1}\bigoplus\bigoplus_{i=1}^{k-1}x_{n-k+i}(I(P_n^k)\cap R_{n-k+(i-1)}:x_{n-k+i})R_{n-k+i}\\\bigoplus x_{n-k}(I(P_n^k):x_{n-k})S.
\end{multline*}
Therefore
\begin{multline}\label{eq3}
\sdepth(I(P_n^k))\geq \min\Big\{\sdepth(I(P_n^k)\cap R_{n-1}),\sdepth((I(P_n^k):x_{n-k})S),\\\min_{i=1}^{k-1}\{\sdepth((I(P_n^k)\cap R_{n-k+(i-1)}:x_{n-k+i})R_{n-k+i})\}\Big\}
\end{multline}
Now $$I(P_n^k)\cap R_{n-1}=\mathcal{G}(I(P_{n-k-1}^k))[x_n],$$
thus by induction on $n$ and \cite[Lemma 3.6]{HVZ} we have $$\sdepth(I(P_n^k)\cap R_{n-1})\geq \lceil\frac{n-k-1}{2k+1}\rceil+1+1\geq \lceil\frac{n}{2k+1}\rceil+1.$$
Now we need to show that $$\sdepth((I(P_n^k):x_{n-k})S)\geq \lceil\frac{n}{2k+1}\rceil+1$$ and $$\sdepth((I(P_n^k)\cap R_{n-k+(i-1)}:x_{n-k+i})R_{n-k+i})\geq \lceil\frac{n}{2k+1}\rceil+1.$$
For this we consider the following cases:
\begin{description}
\item [(1)] Let $2k+2\leq n\leq 3k+1$.\\
If $n=2k+2$, then $(I(P_n^k):x_{n-k})S=(x_2,\dots,x_{n-k-1},x_{n-k+1},\dots,x_n)S,$ thus by \cite[Theorem 2.2]{BH} and \cite[Lemma 3.6]{HVZ} we have $$\sdepth((I(P_n^k):x_{n-k})S)=\lceil\frac{n-2}{2}\rceil+2\geq \lceil\frac{n}{2k+1}\rceil+1.$$
If $2k+3\leq n\leq 3k+1$, then by Remark \ref{re1} $(I(P_n^k):x_{n-k})S=(\mathcal{G}(I(P_{n-2k-1}^{f(n-k)})),B_{n-k})[x_{n-k}].$
Since $\sdepth(I(P_{n-2k-1}^{f(n-k)}))+|\mathcal{G}(B_{n-k})|\geq 2$, by Remark \ref{rev} we have $$\sdepth(S_{n-2k-1}/I(P_{n-2k-1}^{f(n-k)}))+\lceil\frac{|\mathcal{G}(B_{n-k})|}{2}\rceil\geq 2,$$
then by Remark \ref{reI1},
$$\sdepth(\mathcal{G}(I(P_{n-2k-1}^{f(n-k)})),B_{n-k})\geq 2,$$
and by \cite[Lemma 3.6]{HVZ} we have
$$\sdepth((I(P_n^k):x_{n-k})S)\geq 3=\lceil\frac{n}{2k+1}\rceil+1.$$ Now since
$$(I(P_n^k)\cap R_{n-k+(i-1)}:x_{n-k+i})R_{n-k+i})=(\mathcal{G}(I(P_{n-2k-1+i}^{f(n-k+i)})),B'_{n-k+i})[x_{n-k+i}].$$
So by the same arguments we have $$\sdepth((I(P_n^k)\cap R_{n-k+(i-1)}:x_{n-k+i})R_{n-k+i})\geq 3=\lceil\frac{n}{2k+1}\rceil+1.$$
\item[(2)] Let $n\geq 3k+2$, then by the proof of Lemma \ref{le2}
$$(I(P_n^k):x_{n-k})S=(\mathcal{G}(I(P_{n-2k-1}^k)),B_{n-k})[x_{n-k}]$$ and
$$(I(P_n^k)\cap R_{n-k+(i-1)}:x_{n-k+i})R_{n-k+i}=(\mathcal{G}(I(P_{n-2k-1+i}^k)),B'_{n-k+i})[x_{n-k+i}].$$
By Remark \ref{reI1} we have
\begin{multline*}
\sdepth(\mathcal{G}(I(P_{n-2k-1}^k)),B_{n-k})\geq \\ \min\Big\{\sdepth(\mathcal{G}(I(P_{n-2k-1}^k)))+|\mathcal{G}(B_{n-k})|,\sdepth(S_{n-2k-1}/I(P_{n-2k-1}^k))+\lceil\frac{|\mathcal{G}(B_{n-k})|}{2}\rceil\Big\}.
\end{multline*}
By induction on $n$ we have $\sdepth(\mathcal{G}(I(P_{n-2k-1}^k)))\geq \lceil\frac{n-2k-1}{2k+1}\rceil+1=\lceil\frac{n}{2k+1}\rceil$, and by Theorem \ref{Th11} we have $\sdepth(S_{n-2k-1}/I(P_{n-2k-1}^k))=\lceil\frac{n}{2k+1}\rceil-1$. Therefore $$\sdepth(\mathcal{G}(I(P_{n-2k-1}^k)),B_{n-k})\geq \lceil\frac{n}{2k+1}\rceil+1.$$
Thus by \cite[Lemma 3.6]{HVZ} we have $\sdepth((I(P_n^k):x_{n-k})S)>\lceil\frac{n}{2k+1}\rceil+1.$\\
Now using Remark \ref{reI1} again, we get
\begin{multline*}
\sdepth(\mathcal{G}(I(P_{n-2k-1+i}^k)),B'_{n-k+i})\geq \\ \min\Big\{\sdepth(\mathcal{G}(I(P_{n-2k-1+i}^k)))+|\mathcal{G}(B'_{n-k+i})|,\sdepth(S_{n-2k-1+i}/I(P_{n-2k-1+i}^k))+\lceil\frac{|\mathcal{G}(B'_{n-k+i})|}{2}\rceil\Big\}.
\end{multline*}
By induction on $n$ we have $\sdepth(\mathcal{G}(I(P_{n-2k-1+i}^k)))\geq \lceil\frac{n-2k-1+i}{2k+1}\rceil+1$, and by Theorem \ref{Th11} we have $\sdepth(S_{n-2k-1+i}/I(P_{n-2k-1+i}^k))=\lceil\frac{n-2k-1+i}{2k+1}\rceil$. Therefore $$\sdepth(\mathcal{G}(I(P_{n-2k-1+i}^k)),B'_{n-k+i})\geq \lceil\frac{n-2k-1+i}{2k+1}\rceil+1.$$
Thus by \cite[Lemma 3.6]{HVZ}, $\sdepth((I(P_n^k)\cap R_{n-k+(i-1)}:x_{n-k+i})R_{n-k+i})\geq \lceil\frac{n}{2k+1}\rceil+1$.
This completes the proof.
\end{description}
\end{description}
\end{proof}

\begin{Proposition}\label{pr3}
 Let $n\geq 2k+1$, then $\sdepth({I(C_{n}^k)}/{I(P_{n}^k)})\geq\lceil\frac{n+k+1}{2k+1}\rceil.$
\end{Proposition}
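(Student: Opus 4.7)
The plan is to construct an explicit Stanley decomposition of $J := I(C_n^k)/I(P_n^k)$ by partitioning its nonzero monomials according to which ``wrap edge'' appears in their support, and then to apply Theorem \ref{Th11} to the independent ``middle'' part of each block.

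Set $A := \{1,\dots,k\}$ and $B := \{n-k+1,\dots,n\}$, and recall that $\mathcal{G}(I(C_n^k))\setminus\mathcal{G}(I(P_n^k))$ consists precisely of the wrap edges $\{l,m\}$ with $l\in A$, $m\in B$ and $m-l\geq n-k$. First I would verify the combinatorial description of $J$: if $u$ is a monomial in $I(C_n^k)\setminus I(P_n^k)$, then $\supp(u)$ must contain at least one wrap edge but no path edge; since any two distinct elements of $A$ (resp.\ of $B$) are at path-distance $\leq k-1$ and would form a path edge, this forces $|\supp(u)\cap A|=|\supp(u)\cap B|=1$. Writing $\{l\}=\supp(u)\cap A$ and $\{m\}=\supp(u)\cap B$, every other element of $\supp(u)$ must be at path-distance $>k$ from both $l$ and $m$, so it lies in $\{l+k+1,\dots,m-k-1\}$, and collectively these middle elements must form an independent set in the $k$-th power of the path on $\{l+k+1,\dots,m-k-1\}$. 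Since $(l,m)$ is uniquely recoverable from $u$, this yields a direct sum decomposition of $\ZZ^n$-graded $K$-vector spaces
$$J=\bigoplus_{(l,m)\text{ wrap}} J_{l,m},$$
where $J_{l,m}$ is spanned by the monomials $x_l^a x_m^b v$ with $a,b\geq 1$ and $v$ a monomial in $S_{l,m}:=K[x_{l+k+1},\dots,x_{m-k-1}]$ (taken to be $K$ when $m-l\leq 2k+1$) whose support is path-independent.

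Next I would upgrade each $J_{l,m}$ to a Stanley decomposition using Theorem \ref{Th11}. Let $N:=\max(m-l-2k-1,0)$, so after relabeling variables, $S_{l,m}/I(P_N^k)$ is exactly the space spanned by the admissible monomials $v$ above. Choose any Stanley decomposition
$$S_{l,m}/I(P_N^k)=\bigoplus_j v_j\, K[W_j]$$
with $\min_j |W_j|=\lceil N/(2k+1)\rceil$; this is Theorem \ref{Th11} for $N\geq 2$, and is immediate for $N\in\{0,1\}$, in which case the quotient is $S_{l,m}$ itself. Multiplying each summand by $x_lx_m$ and adjoining $x_l,x_m$ to its free variables produces
$$J_{l,m}=\bigoplus_j x_lx_m v_j\, K\bigl[\{x_l,x_m\}\cup W_j\bigr],$$
which is a valid Stanley decomposition of $J_{l,m}$ because the independence of the middle support is preserved under multiplication by any monomial in $\{x_l,x_m\}\cup W_j$. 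Each summand has dimension $|W_j|+2$, and so (essentially \cite[Lemma 3.6]{HVZ} applied twice)
$$\sdepth(J_{l,m})\geq 2+\left\lceil\frac{N}{2k+1}\right\rceil.$$

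Finally, concatenating these decompositions across all wrap pairs yields a Stanley decomposition of $J$ of depth $\min_{(l,m)}\sdepth(J_{l,m})$. As $(l,m)$ runs over wrap pairs, $m-l$ takes all values in $\{n-k,n-k+1,\dots,n-1\}$, so the minimum is attained at $m-l=n-k$, where $N=\max(n-3k-1,0)$; a direct check (splitting into the two regimes $n\leq 3k+1$ and $n\geq 3k+2$) gives
$$2+\left\lceil\frac{\max(n-3k-1,0)}{2k+1}\right\rceil=\left\lceil\frac{n+k+1}{2k+1}\right\rceil,$$
which is the claimed bound. The main point to execute carefully is the combinatorial description of admissible supports in the second paragraph, together with the uniqueness of the wrap pair $(l,m)$ that makes the direct sum decomposition well-defined; after that, everything reduces to Theorem \ref{Th11} and the standard bookkeeping for Stanley spaces.
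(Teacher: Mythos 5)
Your proposal is correct and follows essentially the same route as the paper: the paper's main case ($n\geq 4k+2$) is precisely your decomposition of $I(C_n^k)/I(P_n^k)$ into blocks indexed by the wrap edges $(l,m)=(j_s,n+1-s)$, each block being a shifted copy of $\big(S_N/I(P_N^k)\big)[x_l,x_m]$ with $N=m-l-2k-1$, so that Theorem \ref{Th11} together with \cite[Lemma 3.6]{HVZ} yields the bound $2+\lceil N/(2k+1)\rceil$ and the minimum is attained at $m-l=n-k$. The only difference is that you run this uniformly for all $n\geq 2k+1$ (allowing $N=0$), whereas the paper treats $2k+1\leq n\leq 3k+1$ and $3k+2\leq n\leq 4k+1$ separately via the degree-two bound and an ad hoc decomposition.
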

\begin{proof}
When $k=1$, then by \cite[Proposition 1.10]{MC4} we have the required result. Now assume that $k\geq 2$ and consider the following cases:
\begin{description}
\item[(1)] If $2k+1\leq n\leq 3k+1.$ Since $I(C_{n}^k)$ is a monomial ideal generated by degree $2$ so by \cite[Theorem 2.1]{HVZ} $\sdepth(I(C_{n}^k)/I(P_{n}^k))\geq 2=\lceil\frac{n+k+1}{2k+1}\rceil.$

\item[(2)] When $3k+2\leq n\leq 4k+1$, then we use \cite{HVZ} to show that there exist Stanley decompositions of desired Stanley depth. Let $s\in \{1,2,\dots k\}$, $j_s\in\{1,2,\dots,k+1-s\}$ and
$$L:=\bigoplus^{k}_{s=1}\Big(\bigoplus^{k+1-s}_{j_{s}=1}x_{j_{s}}x_{n+1-s}K[x_{j_{s}},x_{j_{s}+k+1},x_{n+1-s}]\Big).$$
It is easy to see that $L\subset I(C_{n}^k)\backslash I(P_{n}^k)$. Now let $u_{i}\in I(C_{n}^k)\backslash I(P_{n}^k)$ be a squarefree monomial such that $u_i\notin L$ then clearly $\deg (u_{i})\geq 3$. Since
$$I(C_{n}^k)/I(P_{n}^k)\cong L\bigoplus_{u_{i}}u_iK[\,\supp (u_{i})]$$
Thus $\sdepth({I(C_{n}^k)}/{I(P_{n}^k)})\geq3=\lceil\frac{n+k+1}{2k+1}\rceil$ as required.

\item[(3)] Now, assume that $n\geq4k+2$. We have the following $K$-vector space isomorphism:
$$I(C_{n}^k)/I(P_{n}^k)\cong \bigoplus^{k}_{j_{1}=1} x_{j_{1}}x_{n}\frac{K[x_{j_{1}+k+1},x_{j_{1}+k+2},\dots,x_{n-k-1}]}{(x_{j_{1}+k+1}x_{j_{1}+k+2},x_{j_{1}+k+1}x_{j_{1}+k+3},\dots,x_{n-k-2}x_{n-k-1})}[x_{j_{1}},x_{n}] $$
$$\bigoplus\bigoplus^{k-1}_{j_{2}=1} x_{j_{2}}x_{n-1}\frac{K[x_{j_{2}+k+1},x_{j_{2}+k+2},\dots,x_{n-k-2}]}{(x_{j_{2}+k+1}x_{j_{2}+k+2},x_{j_{2}+k+1}x_{j_{1}+k+3},\dots,x_{n-k-3}x_{n-k-2})}[x_{j_{2}},x_{n-1}] \bigoplus$$
$$\dots$$
$$\bigoplus^{2}_{j_{k-1}=1} x_{j_{k-1}}x_{n-(k-2)}\frac{K[x_{j_{k-1}+k+1},x_{j_{k-1}+k+2},\dots,x_{n-2k+1}]}{(x_{j_{k-1}+k+1}x_{j_{k-1}+k+2},x_{j_{k-1}+k+1}x_{j_{k-1}+k+3},\dots,x_{n-2k}x_{n-2k+1})}[x_{j_{k-1}},x_{n-k+2}] $$
$$\bigoplus x_{1}x_{n-(k-1)}\frac{K[x_{k+2},x_{k+3},\dots,x_{n-2k}]}{(x_{k+2}x_{k+3},x_{k+2}x_{k+4},\dots,x_{n-2k-1}x_{n-2k})}[x_{1},x_{n-(k-1)}]. $$
Thus
$$I(C_{n}^k)/I(P_{n}^k)\cong\bigoplus^{k}_{s=1}\Big(\bigoplus^{k+1-s}_{j_{s}=1} x_{j_{s}}x_{n+1-s}\big({{S_{{j_{s}+k+1},{n-s-k}}}}/\big(\mathcal{G}(I(P_{n}^k))\cap S_{{j_{s}+k+1},{n-s-k}} \big)[x_{j_{s}},x_{n+1-s}]\Big),$$
where $S_{j_{s}+k+1,{n-s-k}}=K[x_{j_{s}+k+1},x_{j_{s}+k+2},\dots,x_{n-s-k}].$
Indeed, if $u\in I(C_{n}^k)$ such that $u\not\in I(P_{n}^k)$ then $(x_{j_{s}}x_{n+1-s})|u,$ for only one pair of $s$ and $j_s$. If $(x_{j_{s}}x_{n+1-s})|u$, then $u=x^\gamma_{j_s}x^\delta_{n+1-s}v$, $v\in S_{j_{s}+k+1,{n-s-k}},$ since $v\notin I(P_{n}^k)$, it follows that $v\notin\mathcal{G}(I(P_{n}^k))\cap S_{j_{s}+k+1,{n-s-k}}.$ Clearly, $$S_{j_{s}+k+1,{n-s-k}}/\mathcal{G}(I(P_{n}^k))\cap S_{j_{s}+k+1,{n-s-k}}\cong S_{n-(j_s+2k+s)}/I(P_{n-(j_s+2k+s)}^k).$$
Thus by Theorem \ref{Th11} and \cite[Lemma 3.6]{HVZ}, we have

$$\sdepth(I(C_{n}^k)/I(P_{n}^k))\geq\min_{s=1}^{k}\{\lceil\frac{n-(j_{s}+s+2k)}{2k+1}\rceil+2\}.$$
It is easy to see that $\max\{j_{s}+s\}=k+1.$ Therefore
$$\sdepth(I(C_{n}^k)/I(P_{n}^k))\geq\lceil\frac{n-(3k+1)}{2k+1}\rceil+2=\lceil\frac{n+k+1}{2k+1}\rceil.$$
\end{description}
\end{proof}

\begin{Theorem}\label{th5}
Let $n\geq 3$, then
\begin{eqnarray*}
  \sdepth(I(C_n^k))&\geq& 2, \text{\,\,\,\,\,\,if\,\, $n\leq 2k+1$}; \\
  \sdepth(I(C_n^k))&\geq&  \lceil\frac{n-k}{2k+1}\rceil+1, \text{\,\,\,\,\,\,if\,\, $n\geq 2k+2$}.
\end{eqnarray*}
\end{Theorem}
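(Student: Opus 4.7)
The plan is to combine Theorem \ref{th3} and Proposition \ref{pr3} through the obvious short exact sequence
$$0\longrightarrow I(P_n^k)\longrightarrow I(C_n^k)\longrightarrow I(C_n^k)/I(P_n^k)\longrightarrow 0,$$
and then invoke Lemma \ref{le02} to transfer a lower bound on $\sdepth$ from the two outer terms to the middle term. This is essentially the same packaging that was used to prove Theorem \ref{Th33} from the analogous short exact sequence on quotients.

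First I would dispatch the small range $3\leq n\leq 2k+1$. In this range $I(C_n^k)$ coincides with the squarefree Veronese ideal of degree $2$ in the variables $x_1,\dots,x_n$, hence is generated in degree $2$; a standard fact (e.g.\ \cite[Theorem 2.1]{HVZ} together with \cite[Lemma 2.1]{KS}) then gives $\sdepth(I(C_n^k))\geq 2$, which is exactly what is claimed.

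For the main range $n\geq 2k+2$, Lemma \ref{le02} applied to the short exact sequence above yields
$$\sdepth(I(C_n^k))\geq\min\bigl\{\sdepth(I(P_n^k)),\sdepth(I(C_n^k)/I(P_n^k))\bigr\}.$$
By Theorem \ref{th3},
$$\sdepth(I(P_n^k))\geq\Bigl\lceil\tfrac{n}{2k+1}\Bigr\rceil+1\geq\Bigl\lceil\tfrac{n-k}{2k+1}\Bigr\rceil+1,$$
while, since $n\geq 2k+2\geq 2k+1$, Proposition \ref{pr3} gives
$$\sdepth\bigl(I(C_n^k)/I(P_n^k)\bigr)\geq\Bigl\lceil\tfrac{n+k+1}{2k+1}\Bigr\rceil.$$
Because $(n+k+1)-(n-k)=2k+1$, we have the identity
$$\Bigl\lceil\tfrac{n+k+1}{2k+1}\Bigr\rceil=\Bigl\lceil\tfrac{n-k}{2k+1}\Bigr\rceil+1,$$
so the second bound also equals $\lceil(n-k)/(2k+1)\rceil+1$. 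The two bounds match, and the claimed inequality follows.

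I do not expect a genuine obstacle here: the two nontrivial ingredients (the sdepth bound for $I(P_n^k)$ and the sdepth bound for $I(C_n^k)/I(P_n^k)$) have already been established as Theorem \ref{th3} and Proposition \ref{pr3}, and they are calibrated so that they fit together exactly via the shift-by-$(2k+1)$ relation above. The only thing to check is the elementary ceiling identity, which is a one-line arithmetic verification. Consequently the proof is essentially a short bookkeeping argument built on the short exact sequence above.
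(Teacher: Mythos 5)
Your proposal is correct and follows essentially the same route as the paper: the degree-two generation argument for $n\leq 2k+1$, and for $n\geq 2k+2$ the short exact sequence $0\to I(P_n^k)\to I(C_n^k)\to I(C_n^k)/I(P_n^k)\to 0$ combined with Lemma \ref{le02}, Theorem \ref{th3}, Proposition \ref{pr3}, and the ceiling identity $\lceil\frac{n+k+1}{2k+1}\rceil=\lceil\frac{n-k}{2k+1}\rceil+1$.
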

\begin{proof}
\begin{description}
\item [(a)] Let $n\leq 2k+1$, since the minimal generators of $I(C_n^k)$ have degree 2, so by \cite[Lemma 2.1]{KS} $\sdepth (I(C_n^k))\geq2$.
\item [(b)] Let $n\geq 2k+2$, consider the short exact sequence $$0\longrightarrow I(P_n^k)\longrightarrow I(C_n^k)\longrightarrow I(C_n^k)/I(P_n^k)\longrightarrow 0,$$
then by Lemma \ref{le02}, $$\sdepth (I(C_n^k))\geq \min\{\sdepth (I(P_n^k)),\sdepth(I(C_n^k)/I(P_n^k))\}.$$
By Theorem \ref{th3}, $$\sdepth (I(P_n^k))\geq \lceil\frac{n}{2k+1}\rceil+1,$$ and by Proposition \ref{pr3}, we have
$$\sdepth(I(C_n^k)/I(P_n^k))\geq \lceil\frac{n+k+1}{2k+1}\rceil=\lceil\frac{n-k}{2k+1}\rceil+1,$$
this finishes the proof.
\end{description}
\end{proof}
\begin{Corollary}\label{Cor4}
Let $n\geq 3$, if $n\leq 2k+1$, then $$\sdepth(I(C_{n}^k))\geq 2=\sdepth(S/I(C_{n}^k))+1.$$ If $n\geq 2k+2$, then
\begin{eqnarray*}
  \sdepth(I(C_{n}^k)) &\geq& \sdepth(S/I(C_{n}^k)),  \text{\,\,\,if\,\, $n\equiv 1,\dots,k\,(\mod (2k+1))$}; \\
  \sdepth(I(C_{n}^k)) &\geq& \sdepth(S/I(C_{n}^k))+1,\text{\,\,\,if\,\, $n\equiv 0,k+1,\dots,n-1\,(\mod (2k+1))$}.
\end{eqnarray*}
\end{Corollary}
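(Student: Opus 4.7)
The plan is to assemble Corollary \ref{Cor4} directly from the two preceding results: the lower bound for $\sdepth(I(C_n^k))$ given in Theorem \ref{th5} and the exact value (or upper bound) for $\sdepth(S/I(C_n^k))$ recorded in Corollary \ref{Cor22}. No new module-theoretic input is needed; the content is an arithmetic comparison of two ceiling functions, stratified by the residue of $n$ modulo $2k+1$.

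For $n\le 2k+1$, Theorem \ref{th5} gives $\sdepth(I(C_n^k))\ge 2$ and Corollary \ref{Cor22} gives $\sdepth(S/I(C_n^k))=1$, which settles the first claim. For $n\ge 2k+2$, write $n=(2k+1)q+r$ with $0\le r\le 2k$. A direct computation gives
\[
\Bigl\lceil\tfrac{n-k}{2k+1}\Bigr\rceil=\begin{cases} q, & 0\le r\le k,\\ q+1, & k+1\le r\le 2k,\end{cases}\qquad \Bigl\lceil\tfrac{n}{2k+1}\Bigr\rceil=\begin{cases} q, & r=0,\\ q+1, & 1\le r\le 2k.\end{cases}
\]
When $r\in\{0,k+1,\dots,2k\}$, Corollary \ref{Cor22} gives $\sdepth(S/I(C_n^k))=\lceil n/(2k+1)\rceil$, while the table above yields $\lceil(n-k)/(2k+1)\rceil+1=\lceil n/(2k+1)\rceil+1$; combining with Theorem \ref{th5} produces $\sdepth(I(C_n^k))\ge \sdepth(S/I(C_n^k))+1$. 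When $r\in\{1,\dots,k\}$, Corollary \ref{Cor22} only supplies the upper bound $\sdepth(S/I(C_n^k))\le \lceil n/(2k+1)\rceil=q+1$, while Theorem \ref{th5} still delivers $\sdepth(I(C_n^k))\ge q+1$, yielding $\sdepth(I(C_n^k))\ge \sdepth(S/I(C_n^k))$.

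The only real obstacle is cosmetic: aligning the three ranges $r=0$, $1\le r\le k$, $k+1\le r\le 2k$ arising from the lower bound $\lceil(n-k)/(2k+1)\rceil+1$ with the two ranges distinguished in Corollary \ref{Cor22}, and observing that the ``gap of $1$'' in the first two strata is precisely what fuels the strict improvement $\sdepth(I(C_n^k))\ge \sdepth(S/I(C_n^k))+1$ on the residues $\{0,k+1,\dots,2k\}$. Since Theorem \ref{th5} and Corollary \ref{Cor22} do all the heavy lifting, the proof reduces to this bookkeeping.
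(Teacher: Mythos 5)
Your proposal is correct and follows exactly the paper's route: the paper's proof of Corollary \ref{Cor4} is the one-line "follows by Corollary \ref{Cor22} and Theorem \ref{th5}," and your write-up just makes the residue-by-residue ceiling arithmetic explicit. The computations $\lceil\frac{n-k}{2k+1}\rceil$ versus $\lceil\frac{n}{2k+1}\rceil$ in each stratum check out, so nothing is missing.
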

\begin{proof}
Proof follows by Corollary \ref{Cor22} and Theorem \ref{th5}.
\end{proof}




\end{document}